\definecolor{bg}{rgb}{0.95,0.95,0.95}
\newcommand{\D}{\mathcal{D}}
\newcommand{\p}{\mathfrak{p}}
\newtheorem {theorem}    {Theorem}[section]
\newtheorem {lemma}      [theorem]    {Lemma}
\newtheorem {corollary}  [theorem]    {Corollary}
\newtheorem {proposition}[theorem]    {Proposition}
\theoremstyle{definition}
\newtheorem{definition}[theorem]{Definition}
\newtheorem{remark}[theorem]{Remark}
\newtheorem{ex}[theorem]{Example}
\numberwithin{equation}{section}
\newenvironment{red}{\relax\color{red}}{\relax}
\newenvironment{blue}{\relax\color{blue}}{\hspace*{.5ex}\relax}
\newcommand{\ber}{\begin{red}}
	\newcommand{\er}{\end{red}}
\newcommand{\beb}{\begin{blue}}
	\newcommand{\eb}{\end{blue}}
	\title[Machine learning class numbers of real quadratic fields]{
		Machine learning class numbers of real quadratic fields}
	\date{\today}
	\author[M. Amir]{Malik Amir}
	\address{Microsoft Research New England, One Memorial Drive, Cambridge, MA 02140, U.S.A.}
	\email{malik.amir.math@gmail.com}
	\author[Y.-H. He]{Yang-Hui He}
	\address{London Institute for Mathematical Sciences, Royal Institution, London W1S 4BS, U.K. \newline \indent Department of Mathematics, City, University of London, EC1V 0HB, U.K. \newline \indent
		Merton College, University of Oxford, OX14JD, U.K. \newline \indent
		School of Physics, NanKai University, Tianjin, 300071, P.R.~China}
	\email{hey@maths.ox.ac.uk}
	\author[K.-H. Lee]{Kyu-Hwan Lee}%$^{\star}$}
	\address{Kyu-Hwan Lee, Department of Mathematics, University of Connecticut, Storrs, CT 06269, U.S.A.}
	\email{khlee@math.uconn.edu}
	\author[T. Oliver]{Thomas Oliver}
	\address{Thomas Oliver, Teesside University, Middlesbrough, U.K.}
	\email{T.Oliver@tees.ac.uk}
	\author[E. Sultanow]{Eldar Sultanow}
	\address{Eldar Sultanow, Capgemini Germany, Bahnhofstraße 30, 90402 Nuremberg, Germany}
	\email{eldar.sultanow@capgemini.com}
	\subjclass[2020]{11R29, 11R80, 62R99}
\begin{document}
		
	\begin{abstract}  
		We implement and interpret various supervised learning experiments involving real quadratic fields with class numbers 1, 2 and 3. 
		We quantify the relative difficulties in separating class numbers of matching/different parity from a data-scientific perspective, 
		apply the methodology of feature analysis and principal component analysis, 
		and use symbolic classification to develop machine-learned formulas for class numbers 1, 2 and 3 that apply to our dataset.
	\end{abstract}
	
	\maketitle
	\tableofcontents

	\section{Introduction}
	The class number of a real quadratic field, or, more generally, of a number field, measures how far its ring of integers is from being a unique factorization domain (UFD), with class number 1 meaning that the ring is a UFD. 
	The {\em Gauss class number problem} for real quadratic fields concerns whether or not there are infinitely many real quadratic fields with class number 1. 
	This fundamental question remains one of the central open questions in number theory to this day.  
	
	In a recent paper \cite{HLOb}, it was observed that supervised learning techniques could be used to distinguish real quadratic fields of class number 1 from those of class number 2.
	That article was the second in a series, featuring also \cite{HLOa} and \cite{HLOc}, in which the unifying theme was the application of machine learning algorithms to arithmetic objects presented by finite lists of coefficients in certain Dirichlet series called $\zeta$-functions (or $L$-functions).
	Subsequently, this series of articles has been enhanced by \cite{HLOP}, which documents a first glimpse of unexpected phenomena amongst the coefficients.
	In all of these papers, the methodology is motivated by the general philosophy that arithmetic objects may be classified through these functions; in particular, various interesting arithmetic invariants appear in their Laurent expansions.
	
	In this article, we seek to formalise, generalise, and interpret the aforementioned supervised learning experiment for real quadratic fields.
	We begin by reviewing some mathematical theory, including the {\em genus field} which leads to a strategy for distinguishing between real quadratic fields of class numbers 1 and 2 using finitely many $\zeta$-coefficients (i.e. coefficients of $\zeta$-functions) and no other invariants.
	Subsequently, we will observe that certain machine learning techniques implicitly recognize these consequences of genus theory in their classifiers. 
	
	%Once we have established a basic mathematical framework, our exploration revolves around answering the following question:
	%\begin{center}\textit{What could machine learning teach us if we were to study class numbers using data alone?}\end{center}
	
	Strategies derived from genus theory break down when considering class numbers 1 and 3.
	A data-scientific perspective will allow us to express a sense in which separating class number 1 from class number 3 is truly more challenging, without reference to genus theory.
	Consequently, applying standard machine learning classifiers to the $\zeta$-coefficients of real quadratic fields with class numbers 1 and 3 does not yield high accuracy predictors.
	To rectify this, we incorporate some, but not necessarily all, additional features inspired by the analytic class number formula for real quadratic fields, that is: 
	\begin{equation}\label{eq.ACNF} 
	\lim_{s \rightarrow 1} (s-1) \zeta_d(s) = \frac {2 R_d h_d}{ \sqrt{D}},
	\end{equation} 
	where $\zeta_d(s)$ is the Dedekind zeta function of $\mathbb{Q}(\sqrt{d})$, $D$ is the discriminant, $R_d$ is the regulator, and  $h_d$ is the class number.
	% With various combinations of these features, we will investigate the performances of machine-learning classifiers and will show how genetic programming algorithms such as symbolic classification can be used to recover the analytic class number formula \beb is that a reasonable claim? \eb, and, furthermore, a parity result concerning the (narrow) class number from genus theory.
	
	In the rest of this introduction, we overview the subsequent sections.
	In Section~\ref{s:theory}, we review Dedekind $\zeta$-functions and establish various results in genus theory that explain several statistical observations about the class numbers of real quadratic fields.
	In Section~\ref{ss.sep}, we explore the pairwise separation of real quadratic fields with different class numbers using finitely many coefficients of their Dedekind $\zeta$-functions.
	More precisely, we introduce a cost function which quantifies the separability, and may be heuristically computed and optimised using the so-called \textit{bubble algorithm}.
	The cost function is built from certain counting functions, which enumerate square-free $d$ such that $\mathbb{Q}(\sqrt{d})$ has specified ramification properties.
	Section~\ref{ss.sep} is complemented by Appendix~\ref{app.dimred}, in which we explore separation of $\zeta$-coefficient data using principal component analysis (PCA). 
	
	In Section~\ref{s:12}, we investigate the binary classification of real quadratic fields with class numbers 1 and 2 using gradient boosting tree based learning algorithms (specifically, LightGBM and CatBoost) and genetic programming (specifically, symbolic classification).
	In particular, we undertake a supervised learning experiment in the style of \cite{HLOb}, that is, using finite lists of $\zeta$-coefficients as features, but place greater emphasis on alternative methodologies and feature analysis.
	Furthermore, rediscover some results from genus theory, first stated in Section~\ref{s:theory}, concerning the parity of class numbers.
	Section~\ref{s:12} is complemented by Appendix~\ref{s:app}, in which we record additional plots and metrics for the experiments.
	
	In Section~\ref{s:13}, we investigate the binary classification of class numbers 1 and 3, using LightGBM. 
	Having previously observed that $\zeta$-coefficients are not sufficient for high accuracy classifiers in this case,
	we also incorporate various combinations of features consisting connected to equation~\eqref{eq.ACNF}.
	In particular, we will involve the ramified primes, and
	some partial sums related to the Dedekind zeta function and the Dirichlet $L$-function.
	Applying the symbolic classifier, we are lead to two approximate formulas for the class number, the first of which is essentially equation~\eqref{eq.ACNF}, and the second of which looks somewhat different but nevertheless suggests that $h_d$ is proportional (resp. inversely proportional) to $\sqrt{D}$ (resp. $R_d$). 
	
	%%%==================
	%%%%%%%%%%%%%%%%
	\subsection*{Acknowledgements}
	MA is supported by Microsoft Research NE, YHH is indebted to STFC UK, for grant ST/J00037X/2, 
	KHL is partially supported by a grant from the Simons Foundation (\#712100), 
	and TO acknowledges support from the EPSRC through research grant EP/S032460/1. MA is also grateful to Mounir Boukadoum, Henry Cohn, Vlad Serban and Maryna Viazovska for their valuable comments, discussions and teachings.
	
	\section{Genus theory}\label{s:theory}
	In \cite{HLOb}, we saw that a random forest classifier trained with a dataset of coefficients for the Dedekind zeta function was able to distinguish between real quadratic fields of class numbers 1 and 2 to high accuracy.
	In order to explain mathematically what a machine learns from the dataset, we review the genus field of a real quadratic field and establish various constraints on its class number in terms of the number of ramified primes. 
	In Section~\ref{ss.sep}, we will quantify one way in which the analogous learning task for real quadratic fields with class numbers $1$ and $3$ is more challenging.
	In Section~\ref{s:12}, we will use the results in this section to inform the feature analysis undertaken.

	\subsection{Dedekind zeta functions}\label{s:DZF}
	Let $K$ be a number field with ring of integers $\mathcal{O}_K$. 
	The Dedekind zeta function of $K$ is defined to be:
	\begin{equation}\label{eq.DedZet}
	\zeta_K(s)=\sum_{I\leq\mathcal{O}_K}N(I)^{-s}=\prod_{\mathfrak{p}\leq\mathcal{O}_K}(1-N(\mathfrak{p})^{-s})^{-1}, \ \ \mathrm{Re}(s)>1,
	\end{equation}
	in which $N$ denotes the norm map and the sum (resp. product) is over the non-zero (resp. prime) ideals of $\mathcal{O}_K$.
	
	For a square-free integer $d$, let $K_d=\mathbb{Q}(\sqrt{d})$.
	We say that $K_d$ is real (resp. imaginary) if $d>0$ (resp. $d<0$).
	The discriminant $D$ of $K_d$ is given by
	\begin{equation}\label{eq.QuadDisc}
	D=\begin{cases}d& \text{ if } d\equiv1~(\mathrm{mod}~4),\\4d& \text{ if } d\equiv2,3~(\mathrm{mod}~4).\end{cases}
	\end{equation}
	When $K=K_d$, we will write $\zeta_K(s)=\zeta_d(s)$. 
	We note that
	\begin{equation}\label{eq.zetafactorization}
	\zeta_d(s)=\zeta(s)L(s,\chi_D),
	\end{equation}
	where  $\chi_D:= \left ( \frac D \cdot \right )$ is the Kronecker symbol attached to $K_d$, $L(s,\chi_D)$ is the associated Dirichlet $L$-function, and $\zeta(s)$ is the Riemann zeta function. 
	By $h_d$, we denote the class number of $K_d$. 
	
	We may write \[ \zeta_d(s)=\sum_{n=1}^{\infty}a_nn^{-s},\] in which $a_n$ denotes the $n$th Dirichlet coefficient of $\zeta_d(s)$.
	We will refer to the sequence $(a_n)_{n=1}^{\infty}$ as the $\zeta$-coefficients of $K_d$.
	Using equation~\eqref{eq.zetafactorization}, we deduce that
	\begin{equation}\label{eq.an}
	a_n=\sum_{m|n}\chi_D(m),
	\end{equation}
	where the sum is over $m$ dividing $n$.
	% and \[\zeta_d(s)=\prod_{p}\left(1-a_pp^{-s}+\chi_D(p)p^{-2s}\right)^{-1},\] where the product is indexed by rational primes.
	In particular, if $n=p$ is prime, then equation~\eqref{eq.an} simplifies to 
	\begin{equation}\label{eq.ap}
	a_p=1+\chi_D(p).
	\end{equation}
	Since $\chi_D$ is a real quadratic character, we have $\chi_D(p)\in\{-1,0,1\}$.
	Thus, for $p$ prime, equation~\eqref{eq.ap} implies that $a_p\in\{0,1,2\}$.
	By construction, we have $\chi_D(p)=0$ if and only if $p$ divides $D$.
	Subsequently, it follows from equation~\eqref{eq.ap} that 
	\begin{equation} \label{eq-div}
	\text{$a_p=1$  \quad $\Longleftrightarrow$ \quad $p$ divides $D$  \quad $\Longleftrightarrow$ \quad  $p$ is ramified in $K_d$.} \end{equation}

	\subsection{Genus fields}\label{genus}
	
	By definition, the {\em genus field} $E_d$ of $K_d$ is the maximal unramified extension of $K_d$ which is abelian over $\mathbb{Q}$, and the {\em extended} genus field $E^+_d$ of $K_d$ is the maximal extension of $K_d$ which is unramified at all finite primes and abelian over $\mathbb{Q}$. 
	Recall that a prime discriminant is a discriminant divisible by a single prime. 
	Write $D=d_1 \cdots d_t$ as a product of prime discriminants $d_i$ with $t=\omega(D)$, the number of distinct primes dividing $D$. 
	
	We will use the following results on genus fields (see, e.g., \cite{Ja,Lemm}): 
	\begin{proposition} We have
		\begin{align} \label{explicitGd} E^+_d = \mathbb Q(\sqrt{d_1}, \dots , \sqrt{d_t}), && E_d= E_d^+ \cap \mathbb R , \\
			\label{gdkd}
			\mathrm{Gal}(E_d^+/K_d)\cong C^+_d/2C^+_d \cong (\mathbb Z/ 2 \mathbb Z)^{t-1},& & \mathrm{Gal}(E_d/K_d)\cong C_d/2C_d \cong (\mathbb Z/ 2 \mathbb Z)^s,
		\end{align}
		where $C^+_d$ denotes the narrow class group of $K_d$  and $C_d$ the class group of $K_d$, and $s=t-1$ if $d$ is a sum of two squares and $s=t-2$ otherwise.
	\end{proposition}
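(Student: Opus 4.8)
The plan is to combine class field theory with an explicit computation of the genus field. Let $H_d^+$ (resp.\ $H_d$) be the narrow (resp.\ wide) Hilbert class field of $K_d$, so that Artin reciprocity gives $\mathrm{Gal}(H_d^+/K_d)\cong C_d^+$ and $\mathrm{Gal}(H_d/K_d)\cong C_d$; since these fields admit intrinsic characterisations and $K_d/\mathbb{Q}$ is Galois, $H_d^+/\mathbb{Q}$ and $H_d/\mathbb{Q}$ are Galois. By definition $E_d^+$ is the largest subextension of $H_d^+/K_d$ that is abelian over $\mathbb{Q}$, hence $E_d^+=(H_d^+)^{[G,G]}$ with $G=\mathrm{Gal}(H_d^+/\mathbb{Q})$. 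Writing $A=\mathrm{Gal}(H_d^+/K_d)\cong C_d^+$, this $A$ is normal in $G$ with $G/A\cong\mathbb{Z}/2\mathbb{Z}$, and a lift $\sigma$ of the nontrivial element acts on $A$ by the Galois action on ideal classes; as $\mathfrak{a}\,\sigma\mathfrak{a}=(N\mathfrak{a})$ is generated by a totally positive rational integer, this action is inversion. Therefore $[\sigma,a]=a^{-2}$ for $a\in A$, and since $A$ is abelian we get $[G,G]=A^2$, i.e.\ $2C_d^+$ under the Artin isomorphism. This yields $\mathrm{Gal}(E_d^+/K_d)\cong A/A^2\cong C_d^+/2C_d^+$, and the same argument with $H_d$ in place of $H_d^+$ gives $\mathrm{Gal}(E_d/K_d)\cong C_d/2C_d$.

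Next I would identify $E_d^+$ explicitly. Being abelian over $\mathbb{Q}$, it lies in a cyclotomic field by Kronecker--Weber; moreover every inertia subgroup of $\mathrm{Gal}(E_d^+/\mathbb{Q})$ injects into $\mathrm{Gal}(K_d/\mathbb{Q})$ because $E_d^+/K_d$ is unramified at the finite primes, hence has order at most $2$. Since $\mathrm{Gal}(E_d^+/\mathbb{Q})$ is abelian and generated by its inertia subgroups (there being no unramified extensions of $\mathbb{Q}$), it is elementary abelian of exponent $2$, so $E_d^+$ is multiquadratic. A quadratic field $\mathbb{Q}(\sqrt m)$ lies in $E_d^+$ exactly when $\mathbb{Q}(\sqrt m)K_d/K_d$ is unramified at all finite primes: a prime ramified in $\mathbb{Q}(\sqrt m)$ but not in $K_d$ obstructs this, while a prime ramified in both has, for odd $p$, cyclic inertia of order $2$ in $\mathbb{Q}(\sqrt m,\sqrt D)$ already accounted for by $K_d$ (the prime $p=2$ needs a more delicate local analysis in terms of the three prime discriminants $-4,8,-8$). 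It follows that the discriminant of $\mathbb{Q}(\sqrt m)$ must be a product of prime discriminants chosen among $d_1,\dots,d_t$, giving $E_d^+\subseteq\mathbb{Q}(\sqrt{d_1},\dots,\sqrt{d_t})$, and the reverse inclusion is the routine check that $\mathbb{Q}(\sqrt{d_1},\dots,\sqrt{d_t})/K_d$ is unramified at every finite prime. Because the $d_i$ have pairwise coprime conductors the radicals $\sqrt{d_i}$ are $\mathbb{F}_2$-independent, so $[\mathbb{Q}(\sqrt{d_1},\dots,\sqrt{d_t}):\mathbb{Q}]=2^t$, hence $[E_d^+:K_d]=2^{t-1}$ and $\mathrm{Gal}(E_d^+/K_d)\cong(\mathbb{Z}/2\mathbb{Z})^{t-1}$, consistent with the previous step.

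Finally I would treat $E_d$ and the exponent $s$. Since $K_d$ is totally real, an extension of $K_d$ is unramified at the archimedean places iff it is totally real, so $E_d$ is the maximal totally real subfield of $E_d^+$, which is the fixed field of complex conjugation, namely $E_d^+\cap\mathbb{R}$. Hence $[E_d^+:E_d]\in\{1,2\}$ according as $E_d^+=\mathbb{Q}(\sqrt{d_1},\dots,\sqrt{d_t})$ is or is not totally real, i.e.\ according as every $d_i$ is positive or not. A short argument with the two-squares theorem (the prime discriminant at an odd prime $p$ is positive iff $p\equiv1\Mod 4$, and the prime discriminant at $2$, when it occurs, is positive iff it equals $8$, i.e.\ $d\equiv2\Mod 8$) identifies ``every $d_i>0$'' with ``$d$ is a sum of two squares''; thus $\mathrm{Gal}(E_d/K_d)\cong(\mathbb{Z}/2\mathbb{Z})^s$ with $s=t-1$ if $d$ is a sum of two squares and $s=t-2$ otherwise.

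I expect the main obstacle to be the explicit determination of $E_d^+$, and specifically the ramification analysis at the prime $2$: one must track which of the prime discriminants $-4,8,-8$ occurs in $D$ and in $\mathrm{disc}\,\mathbb{Q}(\sqrt m)$ to decide exactly which quadratic subfields yield extensions of $K_d$ unramified at $2$. The class field theory input and the odd-prime ramification are comparatively formal, and the only other point requiring care is the clean statement of the two-squares criterion in the last step.
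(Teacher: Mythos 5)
The paper does not prove this proposition at all: it is quoted as a standard result with a pointer to Janusz and Lemmermeyer, so there is no ``paper proof'' to compare against. Your sketch is, in essence, the standard argument from those references, and its structure is sound: (i) the identification $\mathrm{Gal}(E_d^{+}/K_d)\cong C_d^{+}/2C_d^{+}$ via $E_d^{+}=(H_d^{+})^{[G,G]}$ and the computation $[G,G]=A^{2}$ from the fact that conjugation acts on ideal classes by inversion (using that $\mathfrak{a}\,\sigma\mathfrak{a}=(N\mathfrak{a})$ is generated by a totally positive rational integer, so the argument is valid in the narrow class group as well); (ii) the inertia argument showing $\mathrm{Gal}(E_d^{+}/\mathbb{Q})$ is elementary abelian of exponent $2$, hence $E_d^{+}$ multiquadratic, and the identification of its quadratic subfields with products of the prime discriminants $d_i$; (iii) the passage to $E_d$ as the fixed field of complex conjugation and the translation of ``all $d_i>0$'' into ``$d$ is a sum of two squares.'' I checked the last equivalence case by case ($d\equiv1$, $2$, $3\bmod 4$) and it is correct, including your criterion that the dyadic prime discriminant is $+8$ exactly when $d\equiv2\bmod 8$ (the case $d\equiv3\bmod4$ forces the factor $-4$ and simultaneously an odd prime $\equiv3\bmod4$, so both sides of the equivalence fail together). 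The one place you leave genuinely open is the one you flag yourself: the ramification analysis at $p=2$ needed to pin down exactly which quadratic subfields give extensions of $K_d$ unramified above $2$, and likewise the reverse inclusion that $\mathbb{Q}(\sqrt{d_1},\dots,\sqrt{d_t})/K_d$ is unramified at $2$. This is a finite local computation with the discriminants $-4,\pm8$ (or an appeal to the conductor--discriminant formula), so it is a gap of detail rather than of idea; completing it would make the sketch a full proof.
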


	Let $n_d$ denote the number of rational primes ramified in $K_d$. Since the ramified primes in $K_d$ are precisely those which divide $D$, we have $n_d=t=\omega(D)$. The above proposition implies a general result given below on the parity of class numbers $h_d$.
	
	\begin{corollary} \label{iff}
		The class number $h_d$ is odd if and only if either $n_d=1$ or $D=d_1d_2$ with prime discriminants $d_1, d_2 <0$.
	\end{corollary}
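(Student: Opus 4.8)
The plan is to reduce the parity of $h_d$ to the vanishing of the exponent $s$ appearing in the Proposition, and then to read that exponent off the signs of the prime discriminants $d_1,\dots,d_t$. First, since $C_d$ is a finite abelian group of order $h_d$, the quotient $C_d/2C_d$ is trivial if and only if $h_d$ is odd; combining this with the isomorphisms $\mathrm{Gal}(E_d/K_d)\cong C_d/2C_d\cong(\Z/2\Z)^s$ from \eqref{gdkd}, we obtain that $h_d$ is odd if and only if $s=0$.

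Next I would run through the possibilities for $n_d=t=\omega(D)$ using the dichotomy $s=t-1$ (if $d$ is a sum of two squares) or $s=t-2$ (otherwise). Because $s\ge 0$, when $t=1$ we are forced into the case $s=0$ (and $d$ is automatically a sum of two squares), so fields with a single ramified prime have odd class number; when $t\ge 3$ we get $s\ge t-2\ge 1$, so $h_d$ is even, and here the right-hand side of the corollary fails as well, since $n_d\ge 3$ and $D$ cannot be a product of only two prime discriminants. This isolates the case $t=2$, where $D=d_1d_2$ automatically and $s=0$ holds precisely when $d$ is not a sum of two squares; it remains to show that, for $t=2$, ``$d$ is not a sum of two squares'' is equivalent to ``$d_1,d_2<0$''.

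For this last equivalence --- the crux of the argument --- I would use the explicit description $E_d^+=\Q(\sqrt{d_1},\dots,\sqrt{d_t})$ together with $E_d=E_d^+\cap\R$ from \eqref{explicitGd}. Since the fields $K_d\subseteq E_d\subseteq E_d^+$ are all abelian over $\Q$, the tower law gives $[E_d^+:E_d]=[E_d^+:K_d]/[E_d:K_d]=2^{(t-1)-s}$, so $s=t-1$ if and only if $E_d=E_d^+$, i.e. if and only if $E_d^+\subseteq\R$, i.e. if and only if every $d_i>0$; comparing with the Proposition's dichotomy, $d$ is a sum of two squares exactly when all of the $d_i$ are positive. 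Finally, because $K_d$ is real we have $D>0$, so an even number of the $d_i$ are negative; for $t=2$ this forces either $d_1,d_2>0$ or $d_1,d_2<0$, whence ``$d$ is not a sum of two squares'' $\Longleftrightarrow$ ``not all $d_i>0$'' $\Longleftrightarrow$ ``$d_1,d_2<0$''. Assembling the cases $t=1$, $t=2$ and $t\ge 3$ then yields the corollary. The main point requiring care is the chain $K_d\subseteq E_d\subseteq E_d^+$ and the associated index identity: all three fields are Galois over $\Q$ with $2$-elementary relative Galois groups, and $K_d\subseteq E_d$ holds because $E_d$ is by definition an extension of $K_d$.
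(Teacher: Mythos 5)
Your proof is correct, and while it shares the paper's skeleton---reduce oddness of $h_d$ to $s=0$ via \eqref{gdkd} and then split on $t=n_d$---it handles the decisive $t=2$ step by a genuinely different mechanism. The paper, having reduced to ``$t=2$ and $d$ is not a sum of two squares,'' invokes the classical two-squares criterion to enumerate the possibilities $d=p_1$, $d=2p_1$, $d=p_1p_2$ with $p_1,p_2\equiv 3\pmod 4$, and checks directly that in each case $D$ is a product of two negative prime discriminants. You never touch the arithmetic of sums of two squares: you play the two descriptions of $s$ in the quoted Proposition off against each other, using the index computation $[E_d^+:E_d]=2^{(t-1)-s}$ together with $E_d=E_d^+\cap\R$ from \eqref{explicitGd} to conclude that $d$ is a sum of two squares exactly when all $d_i>0$, after which the sign constraint $D=d_1d_2>0$ finishes the $t=2$ case. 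Your route is more self-contained relative to the Proposition and produces the cleaner general statement ``$d$ is a sum of two squares if and only if every prime discriminant $d_i$ is positive,'' valid for any $t$; the paper's route is more elementary and makes the admissible $d$ explicit. The only input you use beyond the Proposition is the standard fact, implicit in the paper's setup, that the factorization $D=d_1\cdots d_t$ has exactly $t=\omega(D)$ factors, which is what rules out the second alternative of the corollary when $t\ge 3$. Both arguments are complete.
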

	
	\begin{proof}
		Assume that $h_d$ is odd. Then $s=0$ in \eqref{gdkd} and so $t=1$ or $t=2$. If $t=n_d=1$ then we are done. If $t=2$ then $d$ cannot be a sum of two squares. Then $d=p_1$, $d=2p_1$ or $d=p_1p_2$ with primes $p_1, p_2 \equiv 3 \pmod 4$. In all three cases, $D$ is a product of two negative prime discriminants. Conversely, if $n_d=t=1$ then $s=0$ and $h_d$ is odd from \eqref{gdkd}, and if $D=d_1 d_2$ with $d_1, d_2 <0$, then $t=2$, $s=t-2=0$ and $h_d$ is odd.  
	\end{proof}

	Now we establish a series of lemmas which can be observed in the dataset of $\zeta$-coefficients. These lemmas will be used for the interpretation of machine learning classifiers in the next sections.
	
	\begin{lemma}\label{l:1rp}
		If $n_d=1$ then $h_d \neq 2$. In particular, if $n_d=1$ and $h_d\leq2$, then $h_d=1$.
	\end{lemma}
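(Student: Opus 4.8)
The plan is to derive this as an immediate consequence of Corollary~\ref{iff}. Suppose $n_d = 1$. Then the first alternative in the statement of Corollary~\ref{iff} is satisfied, so $h_d$ is odd; in particular $h_d \neq 2$. For the ``in particular'' clause, recall that $h_d$ is a positive integer, hence $h_d \geq 1$; combining this with the hypothesis $h_d \leq 2$ and the fact that $h_d$ is odd forces $h_d = 1$. So the entire lemma reduces to one application of the corollary together with the trivial observation that $1$ is the only odd integer in $\{1,2\}$.

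If one prefers a self-contained argument that does not route through Corollary~\ref{iff}, one can instead appeal directly to \eqref{gdkd}. When $n_d = t = 1$ we have $\mathrm{Gal}(E_d^+/K_d) \cong C^+_d/2C^+_d \cong (\mathbb{Z}/2\mathbb{Z})^{t-1} = (\mathbb{Z}/2\mathbb{Z})^0$, which is trivial; hence the narrow class number $h_d^+$ is odd. Since the wide class group $C_d$ is a quotient of the narrow class group $C_d^+$, the class number $h_d$ divides $h_d^+$, and therefore $h_d$ is odd as well, so $h_d \neq 2$. The second sentence then follows exactly as before.

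There is no genuine obstacle to overcome here, since all the substance lies in the genus-theoretic Proposition and its Corollary, which we are entitled to assume. The only point meriting a moment's care is the degenerate case $t = 1$: one should check that the relevant parity conclusion in \eqref{gdkd} (equivalently, in Corollary~\ref{iff}) does not implicitly require the ``sum of two squares'' dichotomy when $t=1$. This is fine, because for $t = 1$ the group $C^+_d/2C^+_d$ is already trivial regardless of that dichotomy, so $h_d^+$ — and hence $h_d$ — is odd.
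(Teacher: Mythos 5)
Your proof is correct, but it takes a different (and more economical) route than the paper's. You obtain the lemma as an immediate consequence of Corollary~\ref{iff} -- or, in your second variant, from the triviality of $C_d^+/2C_d^+$ when $t=1$ together with the surjection from $C_d^+$ onto $C_d$ -- and in doing so you actually get the stronger conclusion that $h_d$ is odd whenever $n_d=1$. The paper instead argues from scratch: it notes that $n_d=1$ forces $d=2$ (where $h_d=1$) or $d=p\equiv 1 \pmod 4$, uses \eqref{explicitGd} to see $E_d=E_d^+=K_d$, hence $\mathrm{Gal}(E_d/K_d)$ is trivial, and then derives a contradiction with $|C_d|=2$ from the isomorphism $\mathrm{Gal}(E_d/K_d)\cong C_d/2C_d$. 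Both arguments ultimately rest on the same genus-theoretic input \eqref{gdkd}; yours buys brevity and the sharper parity statement by reusing the corollary, while the paper's self-contained case analysis has the side benefit of identifying exactly which fields occur when $n_d=1$ (namely $K_2$ and $K_p$ with $p\equiv1\pmod 4$), which is the structure exploited again in Lemmas~\ref{l:p14} and~\ref{l:even}. One small remark: your caution about the degenerate case $t=1$ in \eqref{gdkd} is sensible but turns out to be unnecessary, since $t=1$ forces $d=2$ or $d=p\equiv1\pmod 4$, and both are sums of two squares, so $s=t-1=0$ applies directly; your detour through the narrow class group is nonetheless a clean way to sidestep the issue.
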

	
	For example, if $d=229$, then $n_d=1$ and $h_d=3$.
	
	\begin{proof}
		If $K_d$ has only one ramified prime, then either $d=2$ or $d=p$ for an odd prime $p\equiv1$ mod $4$.
		If $d=2$ then the class number is $1$.
		It remains to consider the case that $d=p$ is a prime congruent to $1$ mod $4$.
		In this case, by \eqref{explicitGd}, we have $E_d=E^+_d=K_d$ and hence $\mathrm{Gal}(E_d/K_d)$ is trivial. 
		If $C_d$ has order $2$, then equation~\eqref{gdkd} implies that $\mathrm{Gal}(E_d/K_d)$ would also have order $2$, which is a contradiction.
	\end{proof}
	
	\begin{lemma}\label{l:p14}
		If $h_d=1$ and $a_p=1$ for some prime $p\equiv1$ mod $4$, then $d=p$.
	\end{lemma}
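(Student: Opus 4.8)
The plan is to reduce the statement to the parity criterion of Corollary~\ref{iff}. First I would translate the analytic hypothesis into a ramification statement: by the equivalence \eqref{eq-div}, $a_p = 1$ means precisely that $p$ is ramified in $K_d$, i.e. $p \mid D$. Since $p \equiv 1 \pmod 4$, the prime discriminant attached to $p$ is $p$ itself, which is \emph{positive}; this sign constraint is what does the real work.

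Next, since $h_d = 1$ is odd, Corollary~\ref{iff} applies and leaves exactly two possibilities: either $n_d = 1$, or $D = d_1 d_2$ for prime discriminants $d_1, d_2 < 0$. I would dispose of the second possibility first. If $D = d_1 d_2$ with both $d_i$ negative, then, $p$ being an odd prime dividing $D$, it must divide $d_1$ or $d_2$; but the only prime discriminant divisible by $p$ is $p$ itself (as $p \equiv 1 \pmod 4$), and that is positive, contradicting $d_i < 0$. Hence this branch cannot occur, and we must be in the case $n_d = 1$.

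Finally, with $n_d = 1$ the discriminant $D = d_1 \cdots d_t$ has a single prime divisor, which, by the previous paragraph, is $p$; thus $D = p$. Since $p \equiv 1 \pmod 4$, \eqref{eq.QuadDisc} gives $d = D = p$, as claimed. (One could instead run the argument directly from \eqref{gdkd}, using $C_d/2C_d = 0$ to force $s = 0$ and hence $t \in \{1,2\}$, then treating the $t=2$ case by the same prime-discriminant sign analysis; but invoking Corollary~\ref{iff} is cleaner.)

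There is no serious obstacle here: the mathematical content is already contained in Corollary~\ref{iff} together with the unique factorization of $D$ into prime discriminants. The only point requiring care is the bookkeeping on prime discriminants, namely that an odd prime $p \equiv 1 \pmod 4$ contributes the positive prime discriminant $p$ (rather than $-p$ or $4p$) — this is exactly what both eliminates the ``$D = d_1 d_2$ with $d_1, d_2 < 0$'' branch and pins down $D = p$ (not $D = 4p$) in the remaining branch.
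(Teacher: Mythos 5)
Your proof is correct, but it takes a different route from the paper's. You reduce everything to Corollary~\ref{iff}: since $h_d=1$ is odd, either $n_d=1$ or $D=d_1d_2$ with both prime discriminants negative; the sign bookkeeping for $p\equiv 1\pmod 4$ (its prime discriminant is $+p$, never $-p$ or $4p$) kills the second branch and pins down $D=p$ in the first. The paper instead argues directly with the genus field: $h_d=1$ forces $E_d=E_d^+\cap\mathbb{R}=K_d$ via \eqref{gdkd}, and since $p\equiv 1\pmod 4$ is one of the prime discriminants in $D=d_1\cdots d_t$, equation \eqref{explicitGd} gives $\sqrt{p}\in E_d^+\cap\mathbb{R}=K_d$, whence $\mathbb{Q}(\sqrt{p})=K_d$ by comparing degrees. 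The two arguments rest on the same two facts \eqref{explicitGd} and \eqref{gdkd} (Corollary~\ref{iff} is itself derived from them), but the paper's containment argument is shorter and avoids the case split, identifying $K_d$ in one step; yours has the mild advantage of being phrased entirely in terms of the parity criterion and discriminant arithmetic, with no further reference to $E_d^+$. Both correctly isolate the crucial point, namely that $p\equiv 1\pmod 4$ contributes the positive prime discriminant $p$.
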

	
	We cannot allow primes $p\equiv3$ mod $4$ in Lemma~\ref{l:p14}.
	For example, $h_{14}=1$ and $a_7=1$ for $K_{14}$. 
	
	\begin{proof}
		Since $h_d=1$, we have $E_d=E_d^+ \cap \mathbb R=K_d$. Since $a_p=1$, we know that $p$ divides $D$, and $p$ is a prime discriminant. It follows from \eqref{explicitGd} that $\sqrt p \in E_d^+ \cap \mathbb R=K_d$. Then we have $\mathbb Q(\sqrt p) = K_d$.  
	\end{proof}
	
	\begin{lemma}\label{l:even}
		If $h_p=1$ for a prime $p\equiv1$ mod $4$ then $h_{mp}\geq2$ for square-free integers $m\in\mathbb{Z}_{>1}$ not divisible by $p$. %is divisible by $2$?
	\end{lemma}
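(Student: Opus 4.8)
The plan is to argue by contradiction: assume $h_{mp}=1$ and deduce $m=1$. Set $d=mp$. Since $m>1$ is square-free and $p\nmid m$, the integer $d$ is square-free and $d>0$, so $K_d=\mathbb{Q}(\sqrt d)$ is a genuine real quadratic field to which all of the above applies. First I would check that $p$ is ramified in $K_d$: because $p\nmid m$, the prime $p$ divides $d$ to the first power, hence divides the discriminant $D$ of $K_d$ whether $D=d$ or $D=4d$ (cf. \eqref{eq.QuadDisc}), and by \eqref{eq-div} this says precisely that $a_p=1$ for $K_d$.

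Now $p\equiv 1\pmod 4$ and $a_p=1$, so Lemma~\ref{l:p14} applies to $K_d$ and yields $d=p$, i.e. $mp=p$, forcing $m=1$ and contradicting $m\in\mathbb{Z}_{>1}$. Hence $h_{mp}\neq 1$, and since a class number is a positive integer we conclude $h_{mp}\geq 2$. Equivalently, one may simply unwind the proof of Lemma~\ref{l:p14}: as $p\equiv 1\pmod 4$ its prime discriminant is $p$ itself, so $\sqrt p$ lies in $E_d^+=\mathbb{Q}(\sqrt{d_1},\dots,\sqrt{d_t})$ by \eqref{explicitGd}, and since $p>0$ also $\sqrt p\in E_d^+\cap\mathbb{R}=E_d$; if $h_d=1$ then $C_d/2C_d$ is trivial, so \eqref{gdkd} forces $E_d=K_d$, whence $\sqrt p\in K_d=\mathbb{Q}(\sqrt{mp})$ and thus $\mathbb{Q}(\sqrt p)=\mathbb{Q}(\sqrt{mp})$, making $m$ a perfect square.

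There is no serious obstacle here — the statement is essentially a specialization of Lemma~\ref{l:p14}. The only points meriting care are the hypothesis bookkeeping: one uses $p\nmid m$ (rather than merely $m\neq p$) precisely to guarantee that $p$ exactly divides $d=mp$ and therefore ramifies, and one uses square-freeness of $m$ together with $p\nmid m$ to see that $mp$ is itself square-free, so that the earlier lemmas apply to $K_{mp}$ verbatim. It is worth remarking that the hypothesis $h_p=1$ in the statement is not actually invoked by this argument; presumably it is retained to match the context in which the lemma is used when interpreting the machine-learning classifiers.
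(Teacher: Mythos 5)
Your proof is correct and takes essentially the same route as the paper: both verify that $a_p=1$ for $K_{mp}$ (since $p$ divides the discriminant) and then invoke Lemma~\ref{l:p14} together with $mp\neq p$ to conclude $h_{mp}\neq1$. Your observation that the hypothesis $h_p=1$ is never used is also consistent with the paper's own proof, which likewise does not invoke it.
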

	
	We cannot allow primes $p\equiv3$ mod $4$ in Lemma~\ref{l:even}. 
	For example, $h_7=h_{14}=1$.
	
	\begin{proof}
		Given square-free $m\in\mathbb{Z}_{>1}$ and $p\equiv1$ mod $4$, the field $K_{mp}$ satisfies $a_p=1$.
		Since $mp\neq p$, Lemma~\ref{l:p14} already implies that $h_{mp}\neq1$.
	\end{proof}
	
	\begin{remark}
		For $m\geq1$, the field $K_{mp}$ satisfies $a_p=1$ by \eqref{eq-div}.
		If $p\equiv1$ mod $4$ and $h_p=1$, then Lemma~\ref{l:even} implies the existence of infinitely many real quadratic fields with %even 
		class number $\geq2$ such that $a_p=1$. 
		This will be relevant in Example~\ref{ex.a2a5a7}.
	\end{remark}
	
	\begin{lemma}\label{l:3rp}
		If $n_d\geq3$, then $h_d \ge 2$.
		In particular, if $n_d \ge 3$ and $h_d\leq2$, then $h_d=2$.
		Similarly, if $n_d \ge 4$, then $h_d \ge 4$.
	\end{lemma}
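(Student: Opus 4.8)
The plan is to read off the relevant divisibility of $h_d$ directly from the isomorphism $\mathrm{Gal}(E_d/K_d)\cong C_d/2C_d\cong(\Z/2\Z)^s$ supplied by \eqref{gdkd}, using only the crude lower bound $s\geq t-2$ together with the identification $t=n_d=\omega(D)$ recorded after the Proposition. First I would recall the elementary fact that for a finite abelian group $A$, the $\F_2$-vector spaces $A/2A$ and $A[2]$ have equal dimension, namely the $2$-rank of $A$; consequently an isomorphism $A/2A\cong(\Z/2\Z)^s$ forces $2^s\mid|A|$. Applying this with $A=C_d$, so that $|C_d|=h_d$, reduces the entire statement to bounding $s$ from below in terms of $n_d$.

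For the first assertion, suppose $n_d\geq 3$. Then $t=n_d\geq 3$, so $s\geq t-2\geq 1$, whence $2\mid h_d$ and in particular $h_d\geq 2$. Combining this with the hypothesis $h_d\leq 2$ gives $h_d=2$, which is the ``in particular'' clause. For the last assertion, suppose $n_d\geq 4$. Then $s\geq t-2\geq 2$, so $C_d/2C_d\cong(\Z/2\Z)^s$ with $s\geq 2$, hence $4\mid h_d$ and therefore $h_d\geq 4$.

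I do not expect a genuine obstacle here: all the substance is already contained in the Proposition of Section~\ref{genus}, and what remains is a one-line consequence once the dependence of $s$ on $t=n_d$ is made explicit. The only step meriting a moment's care is the passage from ``$C_d/2C_d$ has $\F_2$-dimension $s$'' to ``$2^s\mid h_d$''; for that reason I would state the $A/2A\cong A[2]$ remark explicitly rather than leave it to the reader, after which the proof is pure bookkeeping with the inequality $s\geq t-2$ and the cases $t\geq 3$, $t\geq 4$.
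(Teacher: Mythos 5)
Your proposal is correct and follows the same route as the paper, whose entire proof is the single sentence ``The assertions follow from \eqref{gdkd}''; you are simply making explicit the bookkeeping ($t=n_d$, $s\geq t-2$, and $2^s=|C_d/2C_d|$ dividing $h_d$) that the paper leaves to the reader. The only cosmetic remark is that the divisibility $2^s\mid h_d$ follows immediately from $C_d/2C_d$ being a quotient of $C_d$, so the detour through $A/2A\cong A[2]$ is not needed.
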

	
	Note that it is possible that $K_d$ satisfies $n_d=2$ and $h_d=1$.
	This occurs, for example, if $d=7$, in which case $h_d=1$ and $K_d$ is ramified at the primes $2$ and $7$. 
	
	\begin{proof}
		The assertions follow from \eqref{gdkd}. 
	\end{proof}
	
	\begin{lemma}\label{l:srp}
		Assume that $n_d=2$ and $h_d\in\{1,2\}$.
		Let $p_1$ be the smallest ramified prime in $K_d$.
		If $p_1\equiv1$ mod $4$ (resp. $p_1\equiv3$ mod 4), then $h_d=2$ (resp. $h_d=1$).
	\end{lemma}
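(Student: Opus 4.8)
The plan is to combine the hypothesis on $p_1$ with equation~\eqref{eq.QuadDisc} to pin down the shape of $d$, and then invoke \eqref{gdkd} together with the classical two--squares theorem. In outline: the condition on $p_1$ will force $d\equiv 1\pmod 4$ with $d=p_1p_2$ a product of two odd primes of the same residue mod $4$, and then the parity of $h_d$ is read off from whether $d$ is a sum of two squares.

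First I would observe that the assumption $p_1\equiv 1$ or $3\pmod 4$ makes $p_1$ odd, so $2$ cannot be ramified in $K_d$ (otherwise $2$, rather than $p_1$, would be the smallest ramified prime). By \eqref{eq-div} this means $2\nmid D$, so \eqref{eq.QuadDisc} forces $d\equiv 1\pmod 4$ and $D=d$. Since $n_d=2$, we get $d=p_1p_2$ with $p_1<p_2$ odd primes, and from $p_1p_2=d\equiv 1\pmod 4$ we conclude $p_1\equiv p_2\pmod 4$.

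Next I would use the fact that a square--free positive integer is a sum of two squares if and only if none of its prime factors is congruent to $3\pmod 4$. Hence if $p_1\equiv 1\pmod 4$ then also $p_2\equiv 1\pmod 4$ and $d$ is a sum of two squares, whereas if $p_1\equiv 3\pmod 4$ then also $p_2\equiv 3\pmod 4$ and $d$ is not a sum of two squares. Feeding this into the Proposition of Section~\ref{genus} with $t=n_d=2$: in the first case $s=t-1=1$, so $\mathrm{Gal}(E_d/K_d)\cong(\Z/2\Z)^{1}$ is nontrivial and thus $2\mid h_d$ by \eqref{gdkd}; combined with $h_d\in\{1,2\}$ this yields $h_d=2$. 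In the second case $s=t-2=0$, so $C_d/2C_d$ is trivial, i.e.\ $h_d$ is odd, and again with $h_d\in\{1,2\}$ we obtain $h_d=1$.

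The argument has no genuine obstacle; the one point that needs care is that the hypothesis on $p_1$ tacitly excludes the even--discriminant situations (in which $p_1=2$), and it is precisely this restriction that produces the clean parity dichotomy. Everything else is bookkeeping with \eqref{eq.QuadDisc}, \eqref{explicitGd}, \eqref{gdkd}, and the two--squares theorem.
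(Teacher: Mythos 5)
Your proof is correct, but it takes a genuinely different route from the one in the paper. You reduce everything to the single dichotomy built into \eqref{gdkd} --- namely $s=t-1$ or $s=t-2$ according to whether $d$ is a sum of two squares --- after first using \eqref{eq.QuadDisc} to force $d=p_1p_2\equiv 1\pmod 4$ with $p_1\equiv p_2\pmod 4$, and then invoking the two--squares theorem to decide which branch occurs. The paper instead splits into the two congruence cases and treats them by different mechanisms: for $p_1\equiv 1\pmod 4$ it derives a contradiction with $h_d=1$ from Lemma~\ref{l:p14} (which forces $d=p_1$ and hence $n_d=1$), and for $p_1\equiv 3\pmod 4$ it writes $D$ as a product of two negative prime discriminants and computes $E_d^+$ and $E_d$ explicitly from \eqref{explicitGd} to see that $\mathrm{Gal}(E_d/K_d)$ is trivial. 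Your argument is the more uniform of the two and makes the parity statement transparent (it is essentially the same reasoning the paper uses to prove Corollary~\ref{iff}); the paper's version has the advantage of exercising Lemma~\ref{l:p14} and the explicit description of the genus field, which are the tools it wants to highlight for interpreting the machine--learned classifiers. Your observation that the hypothesis on $p_1$ tacitly rules out ramification at $2$ is a point the paper leaves implicit, and it is needed in both arguments (in the paper's proof it is what guarantees $d_2$ is an odd prime discriminant), so it is good that you made it explicit.
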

	
	\begin{proof}
		Write $D=d_1d_2$ with $d_i$ prime discriminants. If $p_1=d_1 \equiv 1$ mod $4$ then $d_2$ is also a prime $\equiv 1$ mod $4$. Further, if $h_d=1$, then Lemma~\ref{l:p14} yields a contradiction. Thus $h_d=2$.
		
		If $p_1= - d_1 \equiv 3$ mod $4$ then $d_2 = - p_2$ for a prime $p_2 \equiv 3$ mod $4$. We obtain from \eqref{explicitGd} that $E^+_d= \mathbb Q( \sqrt{-p_1}, \sqrt{-p_2})$ and $E_d=\mathbb Q(\sqrt{p_1p_2}) =K_d$. By \eqref{gdkd}, we have $h_d=1$.   
		
	\end{proof}

	\section{Separability of $\zeta$-coefficients}\label{ss.sep}
	
	In this section, we explore the relative difficulty in distinguishing between different pairs of class numbers using only the associated $\zeta$-coefficients.
	This is in keeping with the strategy implemented in \cite{HLOb}.
	In Sections~\ref{s:12} and~\ref{s:13}, we will go beyond \cite{HLOb} and incorporate other features inspired by the analytic class number formula into our dataset.
	
	The separability of $\zeta$-coefficients will be quantified in terms of a cost function, which will be introduced in Section~\ref{s:ba}. 
	The cost function is constructed in terms of certain counting functions explored in Section~\ref{ss:cf}.
	The counting functions and the cost function may be heuristically computed and optimised using the so-called \textit{bubble algorithm} introduced in Section~\ref{s:visualisation}.
	In Section~\ref{s:ba13}, we will use the bubble algorithm to establish that the problem of distinguishing between class numbers 1 and 3 is more challenging than between class numbers 1 and 2.
	In Section~\ref{s:ci}, we will explore the role played by $\zeta$-coefficients with non-prime indices in the separation of $\zeta$-coefficients.
	In Appendix~\ref{app.dimred}, we document alternative approaches to the  separation of $\zeta$-coefficients using PCA. 
	
	\subsection{Counting functions}\label{ss:cf}
	
	\begin{definition}
		For a positive real number $X$,
		a positive integer $h\in\mathbb{Z}_{>0}$, 
		a vector of non-negative integers $v\in\mathbb{Z}_{\geq0}^3$, 
		and positive integers $\ell,m,n\in\mathbb{Z}_{>0}$, 
		let $f_h^v(\ell,m,n)$ be the number of square-free positive integers $d<X$ such that $h_d=h$ and $(a_{\ell},a_m,a_n)=v$,
		where $a_{\ell}$ (resp. $a_m$, $a_n$) is the $\ell$th (resp. $m$th, $n$th) coefficient of $\zeta_d(s)$.
	\end{definition}
	
	In symbols, we have
	\begin{equation}\label{eq.fhv}
	f_h^v:\mathbb{Z}_{>0}^3\rightarrow\mathbb{Z}_{\geq0}, \ \ f_h^v(\ell,m,n)=\#\{d<X:h_d=h \text{ and } (a_{\ell},a_m,a_n)=v\}.
	\end{equation}
	The function $f_h^v$ depends on $X$, though this fact is suppressed from the notation.
	
	For rational primes $p,q,r$, if $f_h^v(p,q,r)>0$ then there exists a real quadratic field with class number $h$ and certain ramification at $p,q,r$ prescribed by the vector $v$.
	Indeed, we have already mentioned in Section~\ref{s:DZF} that we have $a_p,a_q,a_r\in\{0,1,2\}$ and a prime $p$ ramifies in $K_d$ if and only if the corresponding coefficient $a_p$ of $\zeta_d(s)$ is $1$.
	Furthermore, we know that a prime is split (resp. inert) if the corresponding coefficient is $2$ (resp. $0$) (cf. \cite[equation~(4.7)]{Ko}). 
	
	We will refer to the inputs of $f_h^v$ as triples, and use the notation $[\ell,m,n]$ so as to distinguish the inputs from the indexing vectors $v$.
	
	\begin{ex}\label{ex.a2a5a7}
		Let $h=1$ and $[\ell,m,n]=[3,5,7]$. Then, for $v=(0,1,0)$ and $X>5$,  
		we have $f_1^v(3,5,7)=1$.
		For all the other $v=(a,1,c)$ for $a,c\in\{0,1,2\}$ and $(a,c) \neq (0,0)$, we have $f_1^v(3,5,7) =0$. Indeed, if $a_5=1$, then $K_d$ is ramified at $5$, and so $d=5m$ for square-free $m$ not divisible by $5$.
		Since $h_5=1$ and $5\equiv1$ mod $4$, Lemma~\ref{l:even} implies that $h_{5m}\geq2$ for $m >1$.
		When $d=5$, we have $a_3=a_7=0$, proving the claim. \end{ex}
	
	\begin{definition}
		For $h\in\mathbb{Z}_{>0}$ and a triple $[\ell,m,n]$, let $g_h(\ell, m,n)$ be the number of $v=(v_1,v_2,v_3) \in \mathbb Z_{\ge 0}^3$ such that $f_h^v(\ell, m,n)>0$, where $f_h^v$ is as in equation~\eqref{eq.fhv}. In symbols, we have
		\begin{equation}\label{gh}
		g_h:\mathbb{Z}^3_{>0}\rightarrow\mathbb{Z}_{\geq0}, \ \ g_h(\ell,m,n)=\#\{v=(v_1,v_2,v_3)\in\mathbb{Z}_{\geq0}^3:f_h^v(\ell,m,n)>0 \}.
		\end{equation}
	\end{definition}
	
	In order to gain some familiarity with these functions, we consider first the case that $\ell,m,n$ are all primes (composites will appear in the sequel).
	Then, since $a_p,a_q,a_r\in\{0,1,2\}$ for primes $p,q,r$, a trivial bound is given by the number of all possible vectors $v$, i.e.,   
	\begin{equation}\label{eq.ghpqr}
	g_h(p,q,r)\leq 3^3=27, \ \  \left(h \in \mathbb Z_{>0}\right). \end{equation}
	
	\begin{ex}\label{gh357} 
		For $h\in\{1,2,3\}$ and $[\ell,m,n]=[3,5,7]$, we will calculate $g_h(3,5,7)$. In Table~\ref{tab:g123}, we list the smallest $d$ such that $K_d$ has class number given by row and $(a_3,a_5,a_7)$ given by column. An entry $\times$ indicates that the vector does not occur, which may be verified using the statements in Section~\ref{genus}. Indeed, if $a_5=1$ then it follows from Corollary \ref{iff} that $h_d$ is odd only when $d=5$. This accounts for 17 occurrences of $\times$ in Table~\ref{tab:g123}. In all the remaining three occurrences, we have $a_3=a_7=1$. Again by Corollary \ref{iff}, we have $h_d$ is odd only when $d=21$, which explains exactly the three occurrences of $\times$.  
		
		\begin{table}[ht!]
			\begin{center}
				\caption{\sf Smallest $d$ such that $K_d$ has class number given by row and $(a_3,a_5,a_7)$ given by column. An entry $\times$ indicates that the vector does not occur.}\label{tab:g123}
				\begin{tabular}{|c|c|c|c|c|c|c|c|c|c|} % <-- Alignments: 1st column left, 2nd middle and 3rd right, with vertical lines in between
					\hline
					$h_d$ &  $(0,0,0)$ &$(0,0,1)$ &$(0,0,2)$ &$(0,1,0)$ &$(0,1,1)$ &$(0,1,2)$ &$(0,2,0)$ &$(0,2,1)$ &$(0,2,2)$  \\ \hline  1&17&77&19&5&$\times$&$\times$& 41&14&11 \\
					2&122&182&218&185&35&65&26&119&74\\
					3&257&2177&473&$\times$&$\times$&$\times$&761&2429&254\\ \hline
					$h_d$ &$(1,0,0)$ &$(1,0,1)$ &$(1,0,2)$ &$(1,1,0)$ &$(1,1,1)$&$(1,1,2)$&$(1,2,0)$&$(1,2,1)$ &$(1,2,2)$\\
					\hline  1& 3&$\times$&57&$\times$&$\times$&$\times$&6&21&141\\
					2&87&42&78&285&105&15&66&609&39\\
					3&993&$\times$&1257&$\times$&$\times$&$\times$&321&$\times$&1101\\ \hline
					$h_d$ &$(2,0,0)$ &$(2,0,1)$ &$(2,0,2)$ &$(2,1,0)$ &$(2,1,1)$&$(2,1,2)$&$(2,2,0)$&$(2,2,1)$ &$(2,2,2)$\\
					\hline  1& 13&7&22&$\times$&$\times$&$\times$&19&301&46\\
					2&178&238&58&10&70&85&34&91&106\\
					3&733&7273&142&$\times$&$\times$&$\times$&229&469&316\\ \hline
				\end{tabular}
			\end{center}
		\end{table}
		Using Table~\ref{tab:g123}, we deduce:
		\begin{equation}\label{eq.g1g2357}
		g_h(3,5,7)=\begin{cases}18,& h=1,~X>301,\\27,&h=2,~X>609,\\16,&h=3,~X>7273.\end{cases}
		\end{equation}
		In particular, we see that there exists a real quadratic field of class number $2$ for every possible combination of ramification at $3,5,7$.
	\end{ex}
	
	\begin{definition}
		For distinct positive integers $i,j\in\mathbb{Z}_{>0}$, let $g_{i,j}$ count the number of $v$ so that both $f_{i}^v$ and $f_{j}^v$ are positive. That is, we define 
		\begin{equation}\label{gij}
		g_{i,j}(\ell,m,n)=\#\{v\in\mathbb{Z}_{\geq0}^3:f_{i}^v(\ell,m,n)>0\text{ and }f_{j}^v(\ell,m,n)>0\}.
		\end{equation}
		Clearly, we have $g_{i,j}\leq\min\{g_i,g_j\}$.
	\end{definition}
	
	\begin{ex}\label{exg12g13}
		Using Table~\ref{tab:g123}, we obtain:
		\begin{equation}
		g_{1,j}(3,5,7)=\begin{cases}18,& j=2,~X>609,\\27,&j=3,~X>7273.\end{cases}
		\end{equation}
	\end{ex}
	
	\subsection{Composite indices}\label{s:ci}
	In our initial investigation of the counting functions introduced in Section~\ref{ss:cf}, we considered only prime indices. With composite indices, the coefficients $a_n$ can take many more values than their prime counterparts.
	Indeed, each character value appearing in equation~\eqref{eq.an} is in $\{-1,0,1\}$, 
	and so we see that $a_n$ is an integer bounded by $1\pm\Omega(n)$,
	where $\Omega(n)$ denotes the number of prime factors dividing $n$ (counted with multiplicity).
	On the other hand, the coefficient $a_n$ counts the number of ideals of norm $n$, which is a non-negative integer.
	Combining these observations, we deduce
	\begin{equation}\label{anrangerefined}
	a_n\in\{0,1,\dots,\Omega(n)-1,\Omega(n),\Omega(n)+1\}.
	\end{equation} 
	The number of values actually achieved by $a_n$ depends on the multiplicity in the prime factorisation of $n$.
	
	\begin{ex}
		If $n=p^2$ for some prime $p$, then, equation~\eqref{eq.zetafactorization} implies that 
		\begin{equation}\label{eq.a9}
		a_{p^2}=1+\chi_D(p)+\chi_D(p)^2\in\{1,3\}.
		\end{equation}
		In equation~\eqref{eq.a9}, we have $a_{p^2}=3$ (resp. $a_{p^2}=1$) if and only if $p$ is ramified (resp. unramified).
		On the other hand, if $n=pq$ is a product of two distinct primes, then equation~\eqref{eq.zetafactorization} implies that
		\begin{equation}\label{eq.a6}
		a_{pq}=1+\chi_D(p)+\chi_D(q)+\chi_D(p)\chi_D(q)\in\{0,1,2,4\}.
		\end{equation} 
		In equation~\eqref{eq.a6}, we have $a_{pq}=0$ if and only if $p$ or $q$ is inert, $a_{pq}=1$ if and only if $p$ and $q$ are ramified, $a_{pq}=2$ if and only if $K_d$ is ramified at one of $p$ and $q$ and splits at the other, and $a_{pq}=4$ if and only if $p$ and $q$ split.
	\end{ex}
	
	Using equation~\eqref{anrangerefined}, we deduce the following analogue of equation~\eqref{eq.ghpqr} for non-prime indices:
	\begin{equation}\label{eq.ghcomp}
	g_h(\ell,m,n)\leq(\Omega(\ell)+2)(\Omega(m)+2)(\Omega(n)+2).
	\end{equation}

	\subsection{Cost function}\label{s:ba}
	In order to measure separability of class numbers $1,2,3$ in our datasets of $\zeta$-coefficients, we will introduce a cost function which may be heuristically calculated using the searching algorithm described in Section~\ref{s:visualisation}. 
	
	The cost function is constructed so as to account for two key considerations.
	On one hand, the cost function favours triples which minimise $g_{i,j}$; that is, we are interested in $\mathrm{argmin}(g_{i,j})\subset\mathbb{Z}_{>0}^3$.
	This is natural, since $g_{i,j}$ is a coarse measure of the extent to which the sets $\{K_d:h_d=i\}$ and $\{K_d:h_d=j\}$ may be separated by the associated $\zeta$-coefficients.
	Indeed, if $g_{i,j}$ were to hypothetically take the value $0$ at some triple $[\ell,m,n]$, then each $v\in\mathbb{Z}^3_{\geq0}$ would correspond to at most one class number.
	
	On the other hand, the minimisation of $g_{i,j}$ needs to be taken in a relative way. Namely, if the union $\{v : f^v_i(\ell,m,n)>0\}\cup\{v:f_j^v(\ell,m,n) >0\}$ is small, the size $g_{i,j}$ of intersection would also tend to be small.  
	Consequently, we define our cost function $C_{i,j}$ to be the ratio of the intersection over the symmetric difference:
	\begin{equation}\label{eq.costfunction}
	C_{i,j}(\ell,m,n):=\frac{g_{i,j}(\ell,m,n)}{g_i(\ell,m,n)+g_j(\ell,m,n)-2g_{i,j}(\ell,m,n)}
	\end{equation}
	for any pair of class numbers $\{i,j\}$ and any triple $[\ell,m,n] \in \mathbb Z_{>0}^3$.
	Then we are searching for
	\begin{equation}\label{eq.Uij}
	\mathrm{argmin}(C_{i,j}).
	\end{equation}
	We will find heuristic solutions to equation~\eqref{eq.Uij} using a searching algorithm described in the next section.
	\begin{remark}\label{rem.LMFDB}
		In our heuristic calculations of the counting functions and cost function, we will count only $d$ whose discriminant $D$ appears in the LMFDB.
		Though the LMFDB is complete for $D<2\times10^6$, it includes some larger $d$ and, for the purposes of this section, we note that the largest $d$ such that $h_d=1$ (resp. $h_d=2$, resp. $h_d=3$) is $34,554,953$ (resp. $43,723,857$, resp. $35,598,713$). The number of real quadratic fields in our dataset for each class number is given in Table \ref{tab:summary}.
		
		\begin{table}[ht!]
			\begin{center}
				\caption{\sf Number of real quadratic fields in our dataset of real quadratic fields extracted from the LMFDB.} 
				\label{tab:summary}
				\begin{tabular}{c|c|c} % <-- Alignments: 1st column left, 2nd middle and 3rd right, with vertical lines in between
					$\#\{h_d=1\}$&$\#\{h_d=2\}$&$\#\{h_d=3\}$\\
					\hline
					177159&183436& 25201 
				\end{tabular}
			\end{center}
			
		\end{table}

	\end{remark}
	
	\begin{ex}\label{e:=0}
		From the dataset available in the LMFDB, we find 
		\begin{equation}\label{eq.g12}
		g_{1,2}(3,5,7)=g_{1,2}(665,740,940)=g_{1,2}(520,783,991)=18.
		\end{equation}
		The value $18$ is much lower than the tentative upper bound which comes from equation~\eqref{eq.ghcomp}. 
		Indeed, we have $\Omega(3)=\Omega(5)=\Omega(7)=1$,  $\Omega(665)=\Omega(740)=\Omega(940)=3$, $\Omega(520)=5$, $\Omega(783)=4$, $\Omega(991)=1$, 
		and so equation~\eqref{eq.ghcomp} implies that
		\begin{equation}\label{ghcompbound}
		g_h(3,5,7)\leq27, \ \ g_h(665,740,940)\leq125, \ \ g_h(520,783,991)\leq126,
		\end{equation}
		for any positive integer $h$.
		The discrepancy between the value given in equation~\eqref{eq.g12} and the hypothetical upper bound in equation~\eqref{ghcompbound} means that there could potentially be many triples $[\ell,m,n] \in \mathbb{Z}^3_{\geq0}$ that make $g_i(\ell,m,n) -g_{i,j}(\ell,m,n)$ and $g_j(\ell,m,n)-g_{i,j}(\ell,m,n)$ large and $C_{i,j}(\ell,m,n)$ small.
	\end{ex}
	
	\subsection{The bubble algorithm for $h_d\in\{1,2\}$}\label{s:visualisation}
	In order to find a solution to equation~\eqref{eq.Uij}, we utilise a searching algorithm referred to as the \textsl{bubble algorithm}. 
	More generally, the bubble algorithm may be used to evaluate all counting functions introduced so far.
	In this section, we will focus on $\{i,j\}=\{1,2\}$.
	The generalisation to other pairs of class numbers is straightforward.
	
	The terminology ``bubble'' is motivated by certain visualisations (bubble charts) of the value distributions of $\zeta$-coefficients, such as Figure~\ref{fig:bubble_chart}.
	In Figure~\ref{fig:bubble_chart} left (resp. right), we see a cube with axes given by coefficient triples $v=(a_{3},a_5,a_7)$ (resp. $v=(a_2,a_3,a_5)$). 
	At each integer vector $v$, we see a coloured bubble whose size is determined by $f^v_1(\ell,m,n)+f^v_2(\ell,m,n)$.
	The presence of the colour red (resp. green) at $v$ indicates that $f_1^v(\ell,m,n)>0$ (resp. $f_2^v(\ell,m,n)>0$), and the size of the red (resp. green) contribution is proportional to the value of $f_1^v(\ell,m,n)$ (resp. $f_2^v(\ell,m,n)$).
	Summarising Section~\ref{s:ba} in visual language, we are interested in minimising the number of mixed bubbles, and maximising the number of pure bubbles.
	
	\begin{figure}[ht!]
		\caption{\sf Value distribution for the triples $(a_3,a_5,a_7)$ (left) and $(a_2,a_3,a_5)$ (right) where red (resp. green) bubbles correspond to class number 1 (resp. 2) real quadratic fields.}
		\label{fig:bubble_chart}
		\begin{adjustwidth}{-.5in}{-.5in} 
			\centering
			\subfloat{\includegraphics[width=9cm]{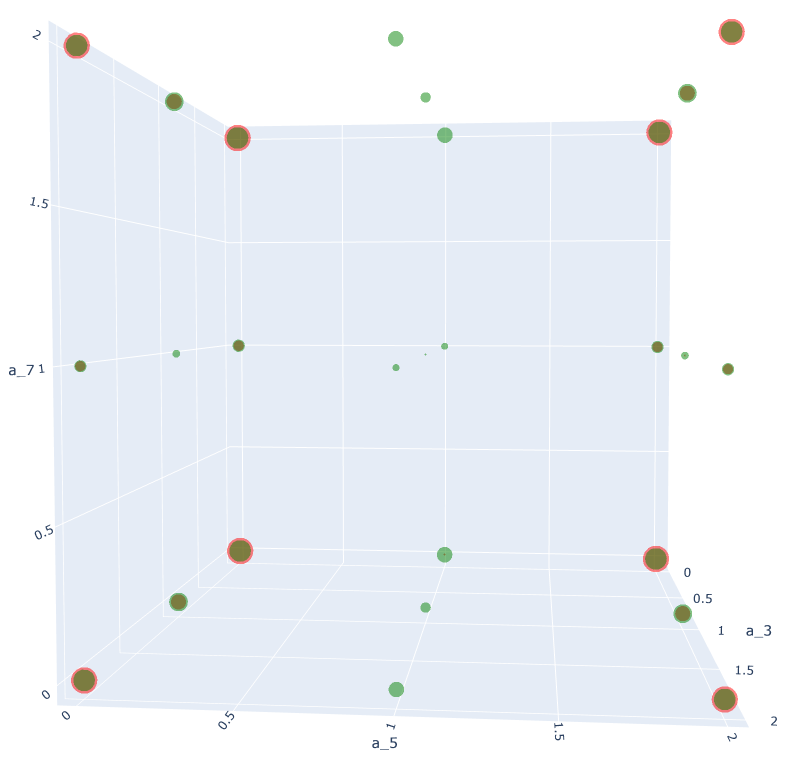}}
			\subfloat{{\includegraphics[width=9cm]{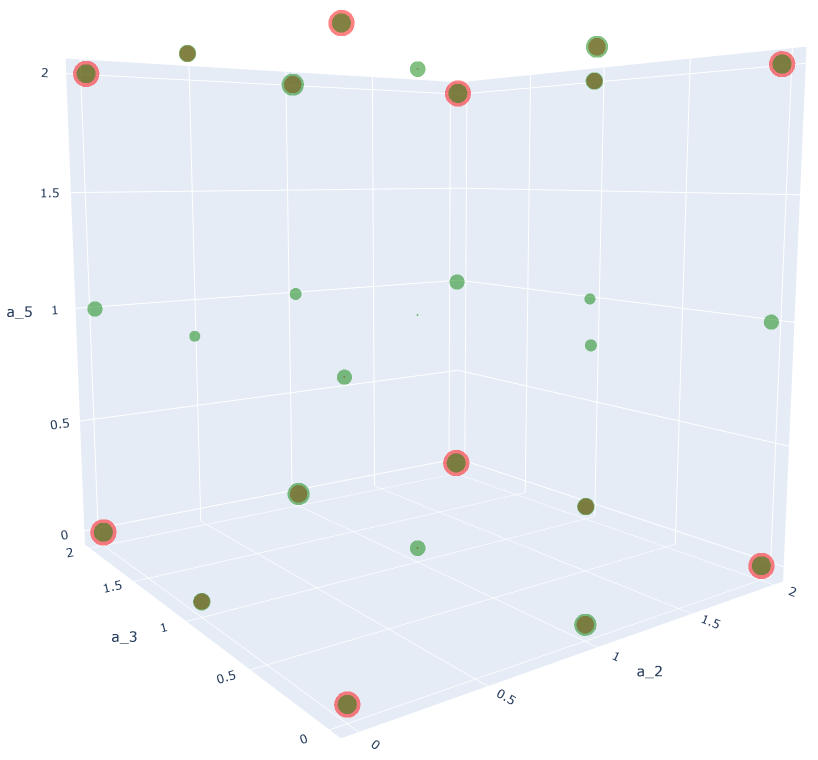} }}
		\end{adjustwidth}
	\end{figure}
	
	In order to compute the counting functions of Section~\ref{ss:cf}, and hence the cost function of Section~\ref{s:ba}, we define a large matrix whose rows are indexed by real quadratic fields $K_d$. 
	The first column contains $d$,
	the second column contains its class number $h_d$, 
	and the remaining columns contain the values taken by the coefficients $a_i$ of $\zeta_d(s)$. 
	If the matrix contains $c$ rows in which $(a_{\ell},a_m,a_n)=(x,y,z)$, then we say that the triple $[\ell,m,n]$ provides $c$ collisions for the vector $(x,y,z)$.
	
	\begin{table}[ht!]
		\begin{center}
			\caption{\sf Sample of the matrix used to minimise $C_{1,2}$.}\label{tab:collision}
			\begin{tabular}{|c|c|c|c|c|c|c|c|} % <-- Alignments: 1st column left, 2nd middle and 3rd right, with vertical lines in between
				\hline
				$d$ & \small\text{class number} & ${a_1}$ & $a_2$ & $a_3$ & ${a_4}$ & $a_5$ & $a_7$\\\hline \hline
				${5}$ & 1 & 1 & 0 & 0 & 1 & 1 & 0\\ \hline
				${33}$ & 1 & 1 & 2 & 1 & 3 & 0 & 0\\ \hline
				${61}$ & 1 & 1 & 0 & 2 & 1 & 2 & 0\\ \hline
				${10}$ & 2 & 1 & 1 & 2 & 1 & 1 & 0\\ \hline
				${15}$ & 2 & 1 & 1 & 1 & 1 & 1 & 2 \\ \hline 
				${65}$ & 2 & 1 & 2 & 0 & 3 & 1 & 2\\ \hline
				${1309}$ & 2 & 1 & 0 & 2 & 1 & 2 & 1\\ \hline
			\end{tabular}
		\end{center}
	\end{table}

	\begin{ex}
		A very small section of this matrix is shown in Table~\ref{tab:collision}.
		Looking at Table~\ref{tab:collision}, we see that the triple $[1,2,4]$ provides three collisions for the vector $(1,0,1)$ and two collisions for the vector $(1,2,3)$. 
		On the other hand, the triple $[2,4,7]$, yields no collisions at any vector.
	\end{ex}

	For each triple $[\ell,m,n]$, we search the entire LMFDB to generate an output of the form:
	\[ (g_1(\ell,m,n),g_{1,2}(\ell,m,n), g_2(\ell,m,n), C_{1,2}(\ell,m,n)). \]

	\begin{ex}\label{ex665740985}
		Consider the triple $[\ell,m,n]=[665,740,985]$. 
		In this case, the output of the bubble algorithm reads as $(18,18,57,0.461538)$.
		In other words, $(a_{665},a_{740},a_{985})$ takes $18$ different values for class number one fields, 
		$18$ common values for both class numbers,
		$57$ different values for class number two fields, 
		and has a cost of $0.461538$. 
		In particular, there is no value for $(a_{665},a_{740},a_{985})$ taken by a class number 1 field which is not also taken by a class number 2 field.
	\end{ex}
	
	Note that there may exist many pure bubbles of one colour and very few or zero pure bubbles of the other. More precisely, the number of pure red (class number 1) bubbles is given by 
	$g_1(\ell,m,n)-g_{1,2}(\ell,m,n)$ and that of pure green (class number 2) ones by $g_2(\ell,m,n)-g_{1,2}(\ell,m,n)$.  Thus, in Example~\ref{ex665740985}, there are no pure red bubbles and $57-18=39$ pure green bubbles for $[\ell,m,n]=[665,740,985]$.
	
	Actually, in the entire dataset available at LMFDB, there exist triples $[\ell, m,n]$ yielding 109 pure green bubbles with no pure red bubbles.
	In Table~\ref{tab:corner}, we list the maximal number of pure green bubbles conditional on the constraint that there is a fixed small number of pure red bubbles.
	
	\begin{table}[ht!]
		\begin{center}
			\caption{\sf Maximum number of pure green bubbles (class number 2), conditional on a prescribed small number of pure red bubbles (class number 1).}\label{tab:corner}
			\begin{tabular}{r||c|c|c|c|c} \label{corners}
				\# of pure red bubbles & 0 & 1&2&3&4\\ \hline
				max \# of pure green bubbles &109&80&60&48&33
			\end{tabular}
		\end{center}
	\end{table} 
	
	Given 3 pure red bubbles, there is a unique triple achieving the maximum of 48 pure green bubbles. Specifically, the triple is $[691,693,850]$. In general, a triple achieving the maximum need not be unique. For example, given 1 pure red bubble, there are 6 triples achieving the maximum of 80 pure green bubbles, as listed in Table~\ref{tab:1red}.
	\begin{table}[ht!]
		\begin{center}
			\caption{\sf Triples $[\ell,m,n]$ whose value distribution yields 1 pure red bubble and $80$ pure green bubbles. In all cases, note that $g_1(\ell,m,n)-g_{1,2}(\ell,m,n)=1$ and $g_2(\ell,m,n)-g_{1,2}(\ell,m,n)=80$.}
			\label{tab:1red}
			\begin{tabular}{c|c|c||c|c|c|c} 
				$\ell$ & $m$ & $n$ & $g_1$ & $g_{1,2}$ & $g_2$ & $C_{1,2}$\\
				\hline
				589 & 637 & 720&48&47&127&0.580247 \\
				637 & 720 & 989&48&47&127&0.580247\\
				585 & 620 & 931&64&63&143&0.777778\\
				372 & 931 & 975&83&82&162&1.012346\\
				804 & 931 & 975&83&82&162&1.012346\\
				775 & 819 & 987&84&83&163&1.024691
			\end{tabular}
		\end{center}
	\end{table}

	\begin{table}[t!]
		\begin{center}
			\caption{\sf Triples $[\ell,m,n]$ minimising the cost function in the case that $\{i,j\}=\{1,3\}$. In all cases, we have $C_{1,3}(\ell,m,n)=5/3$.}
			\label{tab:best13}
			\begin{tabular}{c|c|c||c|c|c} 
				$\ell$ & $m$ & $n$ & $g_1(\ell,m,n)$ & $g_{1,3}(\ell,m,n)$ & $g_3(\ell,m,n)$ \\
				\hline
				62&904&1120&14&10&12\\
				65&1166&1868&18&15&21\\
				65&1300&1604&19&15&20\\
				65&1316&1820&18&15&21\\
				258&1456&1784&19&15&20\\
				258&1580&1784&17&15&22\\
				262&1324&1616&12&10&14\\
				262&1280&1844&14&10&12\\
				269&1436&1844&21&15&18\\
				274&1316&1820&12&10&14\\
				274&1324&1576&12&10&14\\
				289&1436&1576&19&15&20\\
				1364&1568&1913&26&20&26\\
				1374&1444&1664&20&15&19\\
				1468&1802&1984&20&15&19\\
			\end{tabular}
		\end{center}
	\end{table}

	\subsection{Challenges in the case $h_d\in\{1,3\}$}\label{s:ba13}
	The bubble algorithm naturally generalises to other binary classification problems for class numbers in which the features are given by $\zeta$-coefficients. When applied to the dataset of $h_d \in \{1,3\}$, the best performing triples $[\ell,m,n]$ computed by the bubble algorithm are summarised in Table~\ref{tab:best13}.
	Note that the minimal value of achieved by the cost function (which is $5/3$) is much larger than the value previously seen for $h_d\in\{1,2\}$ in Example~\ref{ex665740985}. This quantifies one way in which the classification of $h_d\in\{1,3\}$ is a fundamentally more challenging problem using only $\zeta$-coefficients. 
	Furthermore, when $h_d\in\{1,2\}$, we saw that the value distribution for the triple $[3,5,7]$, which consists of prime numbers, yields some pure green bubbles (cf. Figure~\ref{fig:bubble_chart}).
	On the other hand, in the case that $h_d\in\{1,3\}$, the bubble algorithm does not yield a single such triple of prime numbers. Instead, the optimal values of the function $C_{1,3}$ listed in Table~\ref{tab:best13} are taken by triples of composite numbers. 
	In Section~\ref{s:13}, we will circumvent these challenges by introducing training sets with additional features.
	
	\section{Class numbers 1 and 2}\label{s:12}
	In this section, we investigate the binary classification of real quadratic fields with class numbers 1 and 2 using gradient-boosting tree-based learning algorithms and genetic programming. 
	In Section~\ref{ss:pi}, we apply the LightGBM and CatBoost machine learning algorithms to finite lists of $\zeta$-coefficients. 
	We report all metric scores in the form of tables and figures, including the list of most important features used for the predictions in each model.
	In Section~\ref{sym_class_K}, we use a genetic programming algorithm called symbolic classification to the ramified primes to obtain an optimal approximation for the class number formula, and subsequently recover some results about the parity of the class number first presented in Section ~\ref{s:theory}.
	We maintain the notation from Section \ref{s:theory}.
	
	\subsection{Learning $h_d\in\{1,2\}$ from the prime index coefficients of $\zeta_d(s)$}\label{ss:pi}
	To each square-free $d\in\mathbb{Z}_{>0}$, we attach the vector
	\begin{equation}\label{eq.vectors}
	v(d)=(a_p)_{\substack{p\text{ prime}\\p\leq 1000}}\in\mathbb{Z}^{168},
	\end{equation}
	where $a_p$ is the $p$th Dirichlet coefficient of $\zeta_d(s)$.
	The dimension 168 is the number of primes $\leq1000$. 
	Using equation~\eqref{eq.ap}, we observe that $v(d)\in\{0,1,2\}^{168}$.
	Using the vectors in equation~\eqref{eq.vectors}, we introduce the labelled dataset
	\begin{equation}\label{eq.LDS}
	\mathcal{D}_{1,2}=\{v(d)\rightarrow h_d\},
	\end{equation}
	where $d$ varies over square-free positive integers such that $h_d\in\{1,2\}$ 
	and $d$ appears in the LMFDB \cite{lmfdb}. 
	As was noted in Remark~\ref{rem.LMFDB}, the dataset is complete for $d$ such that $D<2\times10^6$, and contains some larger $d$ (the largest $d$ such that $h_d\in\{1,2\}$ being $43,723,857$).
	
	The labelled dataset in \eqref{eq.LDS} is different to that in \cite[Section~6.1]{HLOb}, which also incorporated the $\zeta$-coefficients with composite indices.
	Our choice to include only prime indices seems intuitively reasonable, since all coefficients can be recovered from those of prime index via \eqref{eq.DedZet}. Whilst it was observed in Section~\ref{s:ci} (in particular, Example~\ref{e:=0}) that $\zeta$-coefficients with composite indices yield greater separation between real quadratic fields of different class numbers, prime indices are nevertheless sufficient for the high accuracy classifiers in the context of this section. We furthermore note that the exclusion of composite indices can also be motivated by the correlation matrix in Figure~\ref{fig:my_label}, which shows that composite indices sharing similar prime decomposition are highly correlated.
	Excluding the composite indices therefore allows for faster training, better generalization, and more reliable (permutation) feature importance. 
	
	\begin{figure}[b!]
		\centering
		\caption{\sf Correlation matrix of the first ten coefficients of the Dedekind zeta functions $\zeta_d(s)$ of real quadratic fields with $h_d\in\{1,2\}$.}
		\label{fig:my_label}
		\includegraphics[width=10cm]{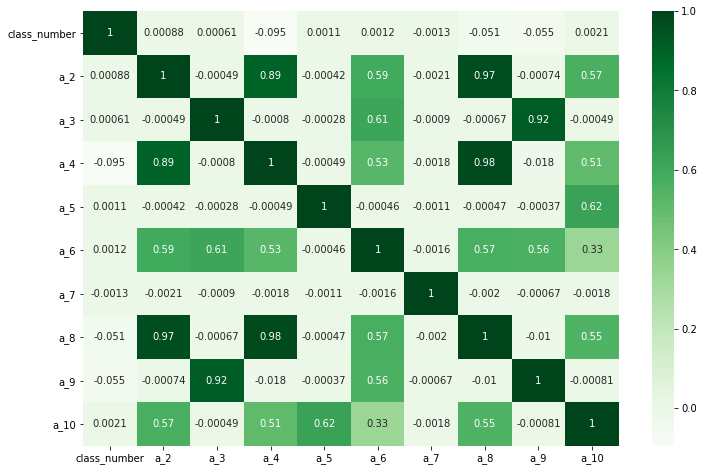}
	\end{figure}
	
	To build our LightGBM and CatBoost supervised learning models, we will use the automated machine learning library AutoMLjar \cite{MLjar}. Our choice to use LightGBM and CatBoost was motivated in large part by Figure~\ref{fig:ldb_performance}, which shows that LightGBM and CatBoost are, in addition to being fast to train, among the best performing models that can be built using AutoMLjar for our experiment. Furthermore, they often represent state-of-the-art models on tabular data. We will consider a training/testing split of 70/30 for $\mathcal{D}_{1,2}$, together with a $10$-fold cross validation performed on the training set, since other splits including 30/70 produce similar results. All the folders generated by AutoMLjar, including codes, datasets and figures for all experiments presented in this article can be found in the GitHub repository \cite{Malik_GH}.
	Tables \ref{tab:70-30-train-metrics}  summarizes various performance scores for the LightGBM and CatBoost models, and  Figure~\ref{fig:ks-LightGBM} summarizes the KS statistics.
	
	\begin{figure}[ht!]
		\caption{\sf Comparison of the LightGBM and CatBoost learning models against other AutoMLjar models on a 70/30 split for the binary classification task $h_d=1$ vs $h_d=2$.}
		\label{fig:ldb_performance}
		\begin{adjustwidth}{-.5in}{-.5in} 
			\centering
			\subfloat{{\includegraphics[width=9cm]{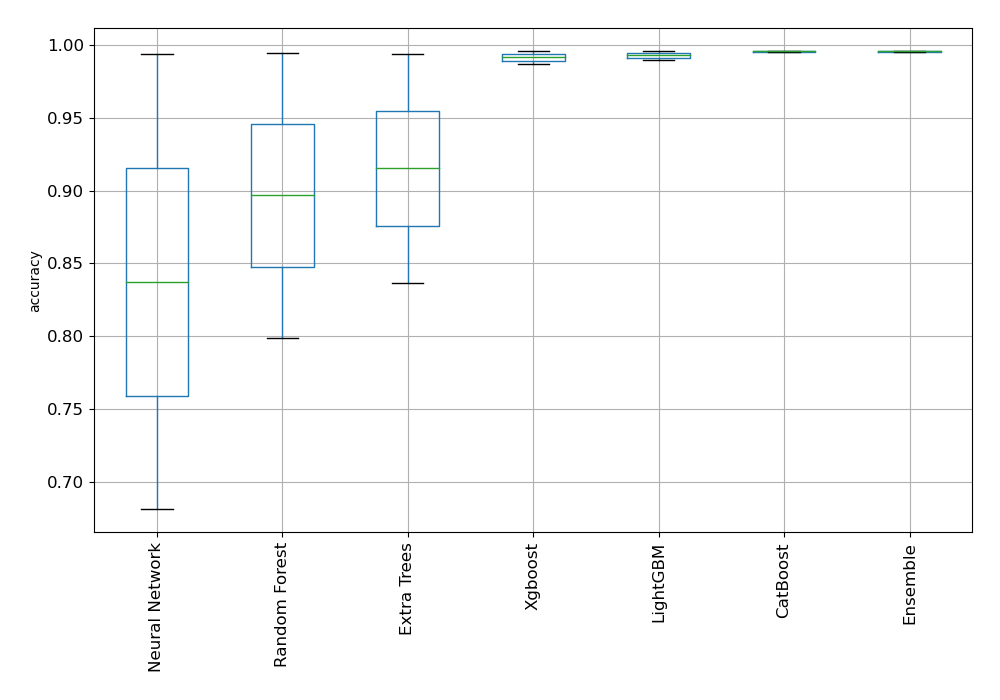} }}
			
		\end{adjustwidth}
	\end{figure}
	
	\begin{table}[ht!]
		\begin{center}
			\caption{\sf Performance metrics over the training set of the LightGBM and CatBoost model for the binary classification task $h_d=1$ vs $h_d=2$ with split $70 / 30$.
				As is standard, AUC is the "area under the receiver operating characteristic curve", F1 is the F1-score and MCC, the Matthews correlation coefficient.
				All these quantities need to be close to 1 for a good model. LogLoss is the log of the loss function and needs to be close to 0.
			}
			\label{tab:70-30-train-metrics}
			\begin{tabular}{c|c|c|c|c|c} % <-- Alignments: 1st column left, 2nd middle and 3rd right, with vertical lines in between
				\textbf{Model}& \textbf{Logloss}&\textbf{AUC}& \textbf{F1}&\textbf{Accuracy}&\textbf{MCC}\\
				\hline
				\textit{LightGBM} &0.0687&0.99776& 0.9902&0.9902&0.9804 \\\textit{CatBoost} &0.0323&0.9992&0.9952&0.9951&0.9903
			\end{tabular}
		\end{center}
	\end{table} 
	
	\begin{figure}[ht!]
		\caption{\sf KS statistic plot over the training set of the LightGBM (right) and CatBoost model (left) for the binary classification task $h_d=1$ vs $h_d=2$ with split $70 / 30$.}
		\label{fig:ks-LightGBM}
		\begin{adjustwidth}{-.5in}{-.5in} 
			\centering
			\subfloat{{\includegraphics[width=9cm]{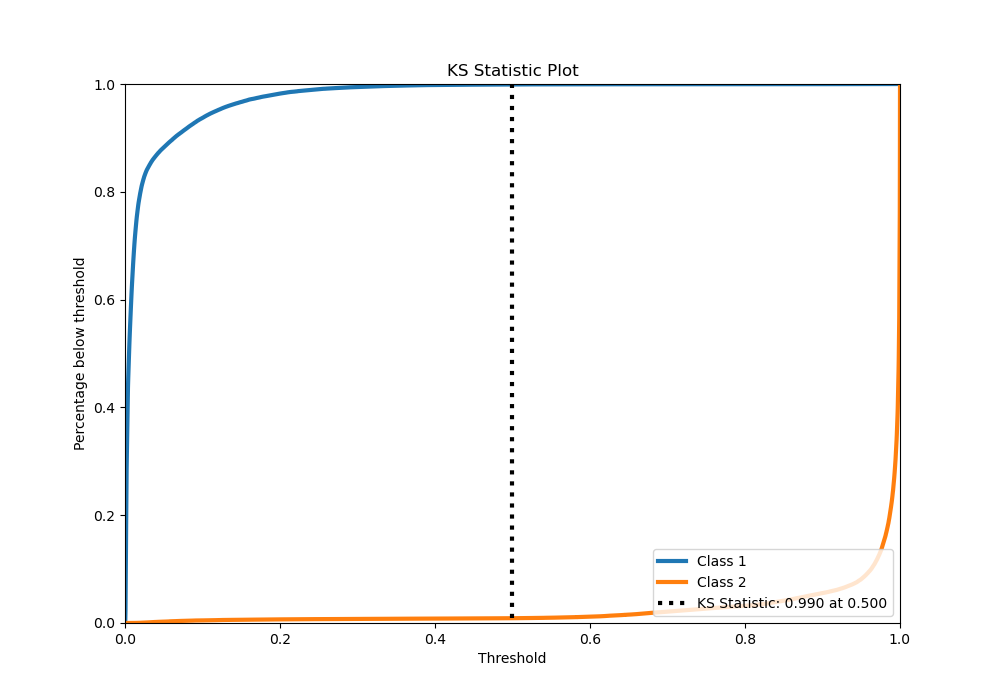} }}
			\subfloat{{\includegraphics[width=9cm]{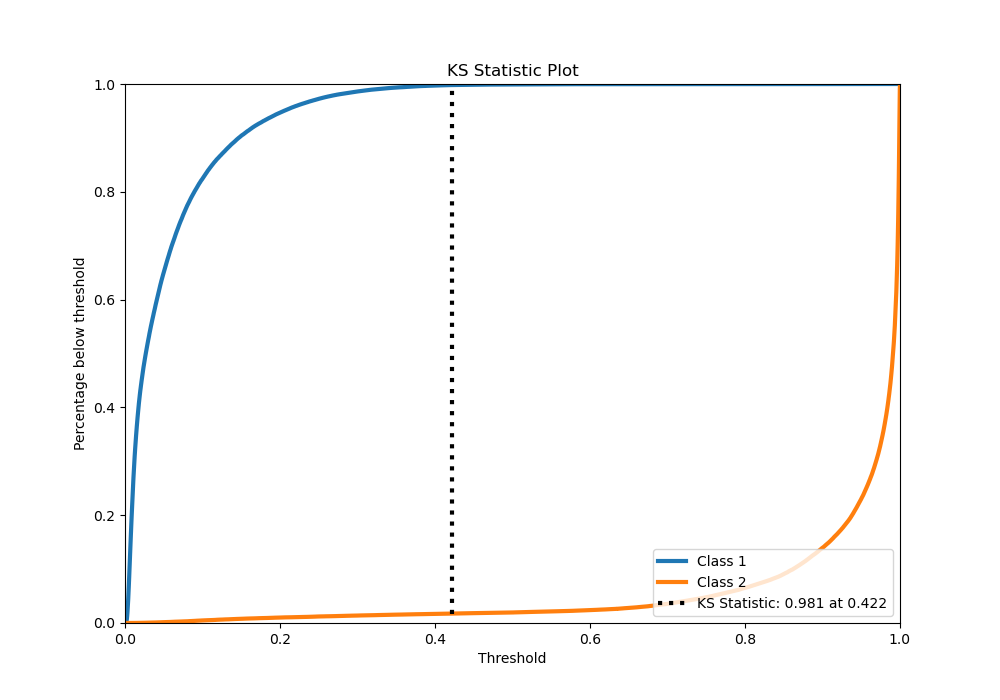} }}
		\end{adjustwidth}
	\end{figure}
	
	Figure~\ref{fig:feature_importance_70-30} summarizes the permutation feature importance.
	Looking at Figure~\ref{fig:feature_importance_70-30}, we observe that the small prime index coefficients are the most important features for the prediction of the class number. 
	In particular, the triple $(a_2,a_3,a_5)$ is the most important for all models. 
	Table \ref{tab:a2a3a5} gives the value distribution of this triple, from which we can clearly see an unequal distribution depending on whether $h_d=1$ or $h_d=2$. 
	This unequal distribution of the data is much more pronounced in the case where $a_2=a_3=a_5=1$ and it is visualized in Figure~\ref{fig:bubble_chart}. 
	
	This points towards the distribution of ones in the sequences of prime coefficients, namely, towards the number $n_d$ of primes that ramify in $K_d$ and that can be captured in the first 1000 coefficients of the function $\zeta_d(s)$. This is in line with the lemmas in Section \ref{s:theory}. Table \ref{tab:ramified_primes} shows the distribution of ramified primes against the class number and Table \ref{tab:prime_detected} represents the distribution of ramified primes appearing in the first 1000 coefficients of the function $\zeta_d(s)$.
	
	\begin{figure}[ht!]
		\caption{\sf Permutation feature importance for the LightGBM (left) and Catboost (right) model for the binary classification task $h_d=1$ vs $h_d=2$ on a $70/30$ split.}
		\label{fig:feature_importance_70-30}
		\begin{adjustwidth}{-.5in}{-.5in} 
			\centering
			\subfloat[\centering LightGBM]{\includegraphics[width=9cm]{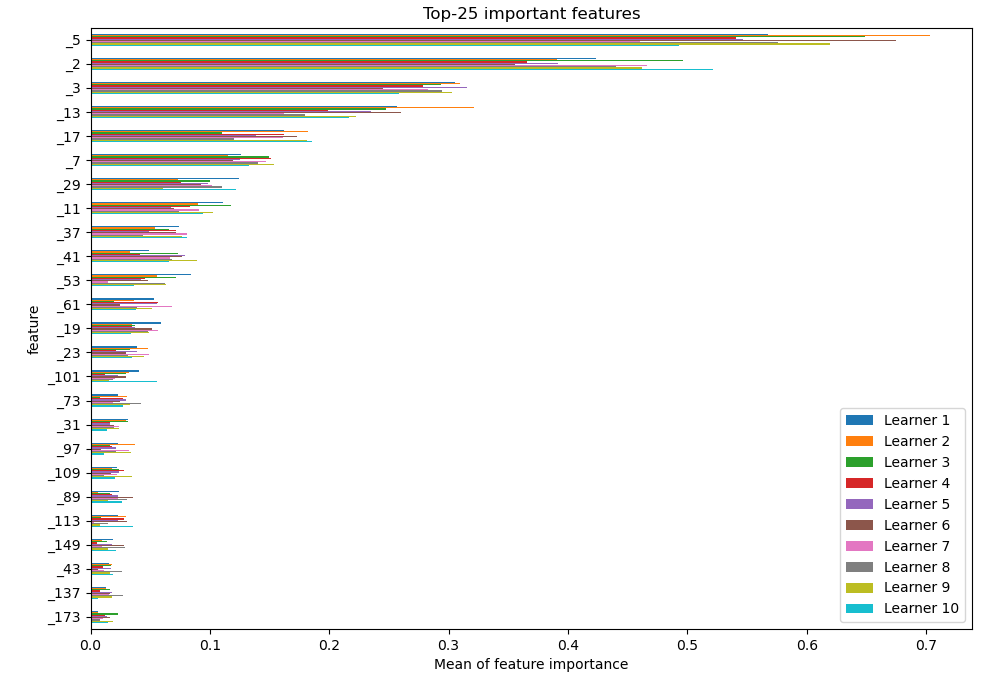}}
			\subfloat[\centering CatBoost]{{\includegraphics[width=9cm]{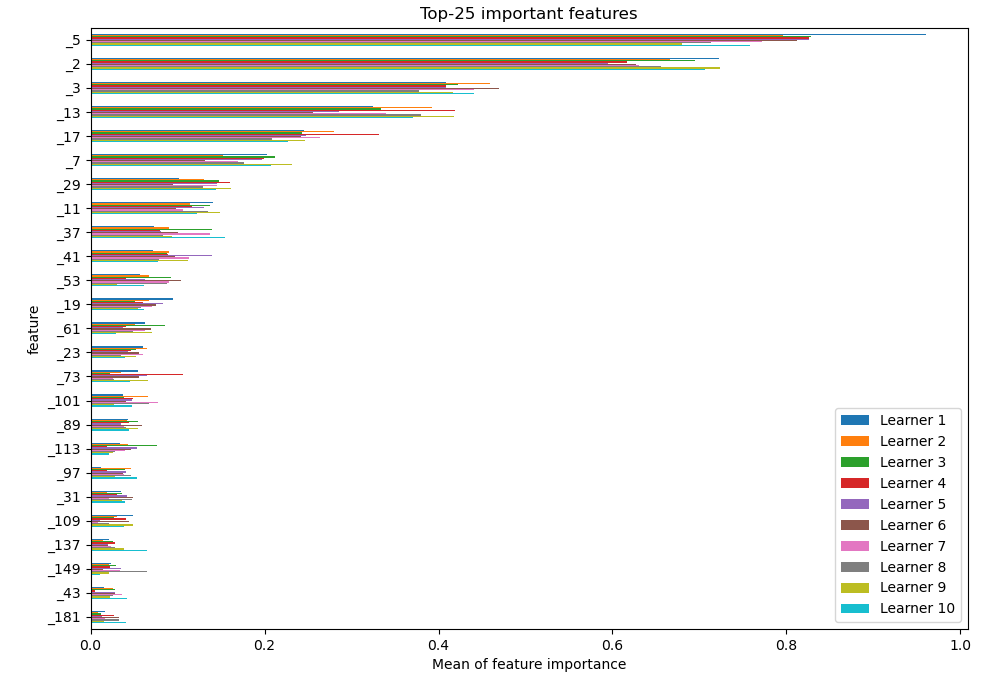} }}
		\end{adjustwidth}
	\end{figure}

	\begin{table}[ht!]
		\begin{center}
			\caption{\sf Number of real quadratic fields with class number $h_d \in \{1,2\}$ in dataset $\D_{1,2}$ for specified values of the $\zeta$-coefficients $a_2,a_3,a_5$}
			\label{tab:a2a3a5}
			\begin{tabular}{c||c|c|c||c|c|c||c|c|c} % <-- Alignments: 1st column left, 2nd middle and 3rd right, with vertical lines in between
				\textbf{$h_d$} & \textbf{$a_2=0$} & \textbf{$a_3=0$} & \textbf{$a_5=0$}& \textbf{$a_2=1$} & \textbf{$a_3=1$} & \textbf{$a_5=1$}& \textbf{$a_2=2$} & \textbf{$a_3=2$} & \textbf{$a_5=2$}\\
				\hline
				1 & 74868 & 76965&88786& 27804 & 23650& 1& 74487& 76544&88372\\
				2 & 60392 & 69501&75225& 62773 & 44673&33013& 60271 & 69262&75198\\
			\end{tabular}
		\end{center}
	\end{table}

	\begin{table}[ht!]
		\begin{center}
			\caption{\sf Number of real quadratic fields with class number $h_d \in \{1,2\}$ in dataset $\D_{1,2}$ for specified number $n_d$ of ramified primes}\label{tab:ramified_primes}
			\begin{tabular}{c|c|c|c} % <-- Alignments: 1st column left, 2nd middle and 3rd right, with vertical lines in between
				\textbf{$h_d$} & $n_d=1$ & $n_d=2$ & $n_d=3$\\
				\hline
				1 & 64522 & 112637&0\\
				2 & 0 & 52451&130985\\
			\end{tabular}
		\end{center}
	\end{table} 
	
	\begin{table}[ht!]
		\begin{center}
			\caption{\sf Detected ramified primes in the first 1000 coefficients of the function $\zeta_d(s)$ for fields $K_d$ with class number $h_d=1,2$ from dataset $\D_{1,2}$.}
			\label{tab:prime_detected}
			\begin{tabular}{c|c|c} % <-- Alignments: 1st column left, 2nd middle and 3rd right, with vertical lines in between
				\text{$h_d$}&\textbf{\# of detected ramified primes}& \textbf{\# of fields with this property}\\
				\hline
				&0&65468\\
				1&1&108527\\
				&2&3164\\
				\hline
				&0&806\\
				2&1&49595\\
				&2&101978\\
				&3&31057\\
			\end{tabular}
		\end{center}
	\end{table}

	\subsection{Learning $h_d\in\{1,2\}$ with symbolic classification}\label{sym_class_K}
	
	In this section, we will construct a learning model obtained from an genetic programming algorithm called symbolic classification \cite{GPSC}. 
	We will explore how symbolic classification can be used to produce an explicit predictor for the class number of real quadratic fields in our dataset, using the software HeuristicLab \cite{HLab}.
	Such a predictor will provide an approximation to the class number formula for our dataset. This approach has been used in theoretical physics, e.g. \cite{AIfeynmann}, to develop good approximations for certain physical quantities based on a given set of learning features. In our setting, we will see that reasonable approximations to the class number formula \eqref{eq.ACNF} can be discovered using simpler learning features. Such approximations, as we will see, are able to shed light on interesting properties of real quadratic fields. In particular, they will allow us to recover some results in Section \ref{s:theory} on the parity of the class number.  
	
	Using the real quadratic fields of Section \ref{ss:pi}, we create a labeled dataset $\D^{\mathrm{SC}}_{1,2}$ by considering as learning features the number $n_d$ of ramified primes in $K_d$ and the ramified primes themselves. 
	Since $h_d \in \{1,2 \}$, we have $n_d\leq3$ by Lemma \ref{l:3rp} and write the ramified primes as $p_1,p_2,p_3$. 
	By convention, if there is no second (resp. third) ramified prime, we will set $p_2=0$ (resp. $p_3=0$). 
	Our dataset $\D^{\mathrm{SC}}_{1,2}$ can be summarized as follows 
	\[\mathcal{D}^{\mathrm{SC}}_{1,2}=\{(n_d,p_1,p_2,p_3)\rightarrow h_d\}.\] 
	As the dataset $\D^{\mathrm{SC}}_{1,2}$ is quite large, running symbolic classification becomes computationally demanding. 
	We thus sample randomly $50,000$ data points from $\mathcal{D}^{\mathrm{SC}}_{1,2}$ and construct a dataset where fields of class number 1 and 2 are chosen evenly. 
	In this section, we use a training and testing split of $40/60$. 
	We will apply our approximate formulas to the whole LMFDB and test their performances. 
	
	Our symbolic classifier was built using the following parameters of HeuristicLab. The population size was fixed to 100, the fitness function used was \textit{mean squared error}, the crossover method used was \textit{subtree swapping crossover}, the mutator was \textit{multi symbolic expression tree manipulator} and for \textit{elitism}, we kept one elite at every generation to favor population exploration and avoid exploitation. 
	For the alphabet of functions, we used various mix of the available functions.

	Among all the formulas we have generated, the simplest one in terms of length and depth of the model is the following approximation of the class number formula: 
	\begin{equation}\label{f3}
	h^{(1,2)}_d=0.17257 \,\sin(1.5703\, p_1)+0.72004 \, n_d, 
	\end{equation}
	Equation~\eqref{f3} yields the following predictor $\phi(d)$ for $h_d$, which has accuracy $98\%$ on the training and testing set, and whose accuracy persists when applied to the entire dataset $\D^{\mathrm{SC}}_{1,2}$:
	\begin{equation}\label{eq.pred}
	\phi(d)=\begin{cases}1& \text{ if } h^{(1,2)}_d<t,\\2& \text{ if } h^{(1,2)}_d\geq t,\end{cases} \ \quad \text{ for } t=1.5115.
	\end{equation}
	The value $t$ appearing in \eqref{eq.pred} serves as the threshold for \eqref{f3}. 
	The classification metrics for \eqref{eq.pred} are listed in Figure \ref{fig:fi_metricsf3}. Class number 1 fields denote the positive class and Figure \ref{fig:fi_metricsf3} shows in particular that our formula predicts no false negatives, very few false positives, and is thus well-suited to study the relationship between $p_1,n_d$ and $h_d$ from the data.
	\begin{figure}[ht!]
		\caption{\sf Summary of the classification metrics for \eqref{f3} used in the binary classification task $h_d=1$ vs $h_d=2$.}
		\label{fig:fi_metricsf3}
		\vskip 0.1 cm 
		\begin{adjustwidth}{-.5in}{-.5in} 
			\centering
			\subfloat[]{{\includegraphics[width=7cm]{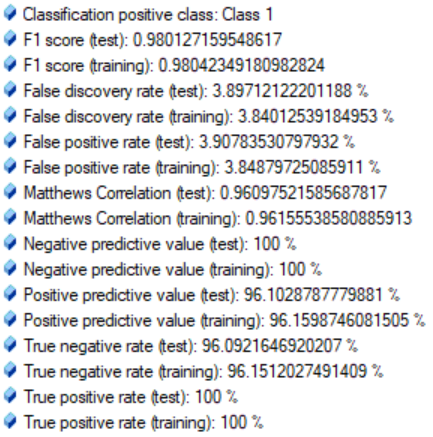} }}
		\end{adjustwidth}
	\end{figure}
	
	Looking at equation~\eqref{f3}, we note that only the number $n_d$ of ramified primes and the first ramified prime $p_1$ are selected by the genetic algorithm and sufficient to distinguish class number 1 and 2 real quadratic fields with very high accuracy. 
	It is natural to ask what kind of information we can extract from $h^{(1,2)}_d$?
	
	If $n_d=1$ (resp. $n_d=3$) then it is easy to see that $\phi(d)=1$ (resp. $\phi(d)=2$) from \eqref{f3}, regardless of the value of $p_1$. Thus we recover Lemma~\ref{l:1rp} and Lemma~\ref{l:3rp} (both under the additional hypothesis that $h_d\in\{1,2\}$), and $\phi$ distinguishes between class number 1 and 2 fields with $100\%$ accuracy. In the case $n_d=2$, we have the following conditions imposed on $p_1$:
	\begin{equation}\label{eq.F3t3}
	h^{(1,2)}_d<t\iff 4n-2.2724<p_1<4n+0.27174, \ \ n\in\mathbb{Z}_{>0}.
	\end{equation}
	By analyzing equation~\eqref{eq.F3t3}, we find that the allowed primes are those $p_1$ congruent to $3\bmod 4$ and $p_1=2$. If, instead, we investigate the $h^{(1,2)}_d\geq t$ condition, then we find the missing primes $p_1$ congruent to $1\bmod 4$. 
	In other words, we have recovered Lemma~\ref{l:srp}.
	All together it reflects the general fact about parities given in Corollary \ref{iff}.
	
	Though a data point in $\mathcal{D}^{\mathrm{SC}}_{1,2}$ contains $p_1, p_2, p_3$, the formula in \eqref{f3} has only the first ramified prime $p_1$, which agrees with Lemma~\ref{l:srp} (in the case $n_d=2$). It is interesting to see that our machine learning model has \textit{learned} this fact from data alone.

	We will see in Section \ref{h=1vs3} that such a classification failed in the case of class number 1 and 3 fields. This gives evidence that one cannot expect to distinguish fields with the same parity using only the simple features $n_d$ and the list of ramified primes. 
	\section{Class numbers 1 and 3}\label{s:13}
	
	We now turn to the classification of real quadratic fields of class number 1 and 3. 
	We attempt to mimic the structure of Section~\ref{s:12}, but include new ideas to circumvent the difficulties we encounter.
	Based on the cost function analysis undertaken in Section~\ref{ss.sep}, we may suspect that $\zeta$-coefficients alone do not yield high accuracy classifiers in this case.
	This is borne out in our implementation, in which, trained on these features, neither LightGBM nor Symbolic Classification succeeded in distinguishing real quadratic fields of class number 1 or 3. 
	Subsequently, we incorporate other features, such as the regulator and partial sums.
	
	\subsection{Balancing data and selecting features}\label{h=1vs3}
	There is a large imbalance between the 25,201 real quadratic fields with class number 3 and the 177,159 with class number 1 included in \cite{lmfdb}. 
	To avoid any bias in the construction of our dataset, we will restrict ourselves to the 11,531 real quadratic fields of class number 3 with discriminant $D\leq 10^6$. 
	To construct a balanced dataset including the same number of real quadratic fields with class number 1 evenly distributed over discriminant ranges, we sample class number 1 fields randomly from the sets $\{K_d:i\cdot 10^5\leq D\leq (i+1)\cdot10^5\}$, $i\in\{0,\dots,9\}$, as many as class number 3 fields in each interval. 
	For example, there are 1,261 fields with class number 3 in the interval $1\leq D\leq10^5$ and so in this interval, we pick the same amount of class number 1 fields. 
	
	Our dataset thus consists of 23,062 fields. 
	For each of them, we compute the vector $v(d)$ as in \eqref{eq.vectors}, 
	the discriminant $D$, 
	the regulator $R_d$, 
	the number $n_d$ of ramified primes,
	the ramified primes $p_1,p_2$ where we set $p_2=0$ in the case that only one prime ramifies,
	and the partial sums $S_{\zeta_d}$ and $S_{\chi_D}$ defined by
	\[ S_{\zeta_d} := \sum_{n=1}^{1000} \frac {a_n} n \quad \text{ and } \quad S_{\chi_D}:= \sum_{n=1}^{1000} \frac {\chi_D(n)} n . \] The partial sums $S_{\zeta_d}$ and $S_{\chi_D}$ are motivated by $\zeta_d(s)$ and $L(s, \chi_D)$, respectively. Our dataset can thus be summarized as
	\[\mathcal{D}^{\mathrm{SC}}_{1,3}=\{(v(d),D,R_d,S_{\zeta_d}, S_{\chi_D},n_d,p_1,p_2)\rightarrow h_d\}.\]

	\subsection{Learning $h_d\in\{1,3\}$ from $\mathcal{D}^{\mathrm{SC}}_{1,3}$}\label{13sl}
	
	The problem of classifying real quadratic fields of class number 1 and 3 is much more difficult than the problem of classifying those with class number 1 and 2. Section~\ref{s:ba13} discussed this problem from a conceptual perspective. In this section, we observe the increased difficulty from a practical standpoint.
	
	In Table \ref{tab:class_number_1vs3} we summarise the features and testing accuracy for various experiments using the LightGBM classifier. 
	As in Section \ref{s:12}, the data segregation is $70/30$, together with a $10$-fold cross validation performed on the training set.
	
	\begin{table}[ht!]
		\begin{center}
			\caption{\sf Performance of LightGBM on various combinations of learning features for the binary classification task of $h_d=1$ vs $h_d=3$.}
			\label{tab:class_number_1vs3}
			\begin{tabular}{|c|c|c|c} % <-- Alignments: 1st column left, 2nd middle and 3rd right, with vertical lines in between
				\textbf{Row Number}&\textbf{Features}&\textbf{Testing Accuracy}\\
				\hline
				1&$(a_p)_{p\leq 1000}$ &  53.34\% \\
				\hline
				2&$(a_p)_{p\leq 1000},~n_d,~p_i$ & 54.87\% \\ \hline
				3&$(a_p)_{p\leq 1000},~S_{\zeta_d}$ & 52.87\%  \\ \hline
				4&$(a_p)_{p\leq 1000},~R_d$ & 91.21\% \\ \hline
				5&$(a_p)_{p\leq 1000},~D$ & 53.75\%\\ \hline
				6&$(a_p)_{p\leq 1000},~D,~R_d$ & 99.38\% \\ \hline
				7&$D,~R_d,~S_{\zeta_d}$ & 99.84\% \\ \hline
				8& $(a_p)_{p\leq 1000},~S_{\chi_D}$ &53.45\% \\ \hline
				9& $D,~R_d,~S_{\chi_D}$& 99.93\% \\ \hline
				10&$(a_{p_i})_{i=1}^{10},~D,~R_d$ & 99.90\%\\ \hline
				11&$(a_{p_i})_{i=1}^{5},~D,~R_d$ & 99.54\% \\ \hline
				12&$(a_{p_i})_{i=1}^{3},~D,~R_d$ & 99.55\% \\ \hline
				13&$(a_{p_i})_{i=1}^{2},~D,~R_d$ &97.51\%\\ \hline
				14&$a_{p_1},~D,~R_d$ & 88.46\%\\ 
			\end{tabular}
		\end{center}
	\end{table} 
	
	There are various observations we can make from Table \ref{tab:class_number_1vs3}. Row 4 shows that the regulator and the prime index coefficients are strong predictors for the class number. 
	When we add the discriminant in row 6, we obtain an almost perfect classification. Also look at the following rows where high accuracies are attained with $S_{\zeta_d}$ and $S_{\chi_D}$.
	Of course, this could be expected from the class number formula \eqref{eq.ACNF}, and we will generate some approximate formulas to $h_d$ in Section~\ref{13sc}.
	As in Section~\ref{s:12}, rows 10-14 show the importance of the small prime index coefficients.
	
	Row 2 suggests that the values $n_d$ and the number of ramified primes detected in the sequence of prime index $\zeta$-coefficients may be uniformly distributed over our dataset of class number 1 and 3. Indeed, Table \ref{tab:prime_detected_1vs3} and \ref{tab:ramified_primes_1vs3} show this phenomenon. Consequently, $n_d$ cannot be taken as a meaningful feature for classification. This situation is in stark contrast with the case of class number 1 and 2. 
	
	\begin{table}[ht!]
		\begin{center}
			\caption{\sf Detected ramified primes in the first 1000 coefficients of the function $\zeta_d(s)$ for real quadratic fields of class number 1 and 3 in dataset $\mathcal{D}^{\mathrm{SC}}_{1,3}$.}
			\label{tab:prime_detected_1vs3}
			\begin{tabular}{c|c|c} % <-- Alignments: 1st column left, 2nd middle and 3rd right, with vertical lines in between
				\text{$h_d$}&\textbf{\# of detected primes}& \textbf{\# of fields with this property}\\
				\hline
				&0&4250\\
				1&1&6904\\
				&2&386\\
				\hline
				&0&4284\\
				3&1&6858\\
				&2&389\\
			\end{tabular}
		\end{center}
	\end{table} 
	
	\begin{table}[ht!]
		\begin{center}
			\caption{\sf Number of ramified primes in fields $K_d$ of class number $h_d\in\{1,3\}$ from dataset $\mathcal{D}^{\mathrm{SC}}_{1,3}$ in terms of their class number.}
			\label{tab:ramified_primes_1vs3}
			\begin{tabular}{c|c|c} % <-- Alignments: 1st column left, 2nd middle and 3rd right, with vertical lines in between
				\textbf{class number $h_d$} & $n_d=1$ & $n_d=2$\\
				\hline
				1 & 4258 & 7282\\
				3 & 4288 & 7243\\
			\end{tabular}
		\end{center}
	\end{table} 
	
	\subsection{Learning $h_d\in\{1,3\}$ with symbolic classification}\label{13sc}
	
	Using the same parameters as in Section \ref{sym_class_K}, symbolic classification yields the formulas below in this subsection. The classification metrics are listed in Figure \ref{fig:fi_metrics_1-3}.
	Unlike in Section \ref{sym_class_K}, the formulas are not tested on the entire LMFDB but only on our dataset.
	
	The first formula obtained is closely related to the class number formula \eqref{eq.ACNF}:
	\begin{equation}\label{f1_1-3}
	h^{(1,3)}_d=\frac{1}{2}\frac{\sqrt{D} \, S_{\chi_D} }{R_d}.
	\end{equation}
	The following predictor attains $100\%$ accuracy on both the training and testing data:
	\begin{equation}\label{eq.pred-13-1}
	\phi(d)=\begin{cases}1&\text{ if } h^{(1,3)}_d<t,\\3& \text{ if } h^{(1,3)}_d\geq t,\end{cases} \quad \text{ for }  t=1.963.
	\end{equation}
	This result shows the capability of symbolic classification to discover an effective formula from a dataset.

	The second formula concerns the features $a_2,a_3,a_5,D$ and $R_d$:
	\begin{equation}\label{f2_1-3}
	\tilde{h}^{(1,3)}_d=\frac{1.8858\sqrt{D}}{R_d\exp(-0.5468 a_2-0.2718a_3)\cos(\sin(-0.2556a_3))\cos(-0.1962 a_5)\cos^4(-0.1952 a_5)}.
	\end{equation}
	The following predictor attains around $99.8\%$ accuracy both on the training and test set:
	\begin{equation}\label{eq.pred-13-2}
	\phi(d)=\begin{cases}1& \text{ if } \tilde{h}^{(1,3)}_d<t,\\3& \text{ if } \tilde{h}^{(1,3)}_d\geq t,\end{cases} \quad \text{ for }  t=15.97.
	\end{equation}
	
	\begin{figure}[ht!]
		\caption{\sf Classification metrics of the formulas (\ref{f1_1-3}) and (\ref{f2_1-3})}
		\label{fig:fi_metrics_1-3} \vskip 0.1 cm
		\begin{adjustwidth}{-.5in}{-.5in} 
			\centering
			\subfloat[equation (\ref{f1_1-3}) \phantom{LLLLLLLLLLLL}]{\includegraphics[width=7.3cm]{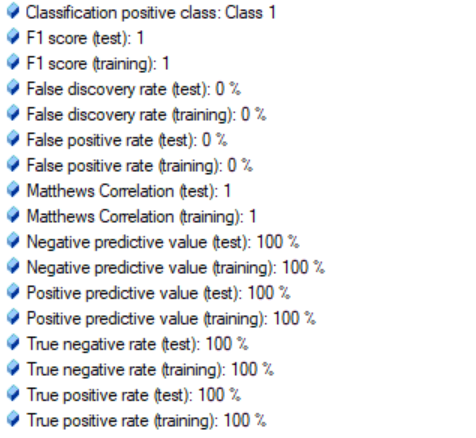}}
			\subfloat[equation (\ref{f2_1-3}) \phantom{LLLLL}]{{\includegraphics[width=7cm]{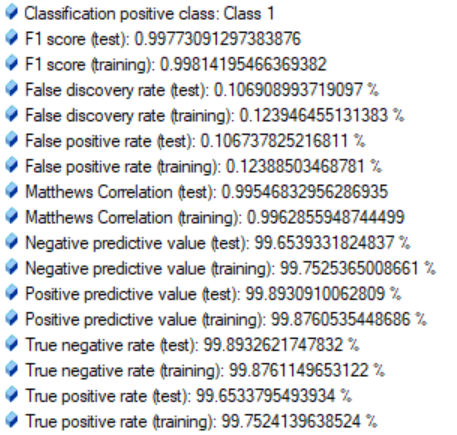}}}
		\end{adjustwidth}
	\end{figure}
	
	\newpage 
	\appendix
	\section{}%Appendices
	
	\subsection{Dimensionality reduction}\label{app.dimred}
	
	In Section~\ref{ss.sep}, we explored how class number data may be separated using the \textit{bubble} algorithm.
	In this appendix, we visualize how PCA clusters our data with respect to the axes of maximal variance. To perform this dimensionality reduction, we compress the features $(a_n)_{n=1}^{1000}$ to a $3$-dimensional space.
	
	The case of class number 1 and 2 real quadratic fields is depicted in Figure \ref{fig:3d-PCA-all-data-h=1vs2}, where we observe that there are forbidden zones for class number 1 fields. It would be interesting to study explicitly these constraints. 
	In addition, we can see that the two classes are not entirely separated or mixed in this $3$-dimensional representation of the data. This shows that our data responds moderately to linear methods and points towards the fact that non linear algorithms, such as LightGBM and CatBoost, can perform better. 
	Furthermore, looking at Figure \ref{fig:3d_PCA(3)-ap-1}, we see that the prime index coefficients alone are not separable through PCA. Note that this does not imply that non-linear separability is unfeasible as we have successfully distinguished these two classes in Section \ref{s:12} using non-linear algorithms.
	
	\begin{figure}[ht!]
		\caption{\sf Dimensionality reduction using PCA for $(a_n)_{n=1}^{1000}$ in the case of real quadratic fields of class number $h_d \in \{1,2\}$ (1: blue;  2: yellow) }
		\label{fig:3d-PCA-all-data-h=1vs2}
		\begin{adjustwidth}{-.5in}{-.5in} 
			\centering
			\subfloat{\includegraphics[width=9cm]{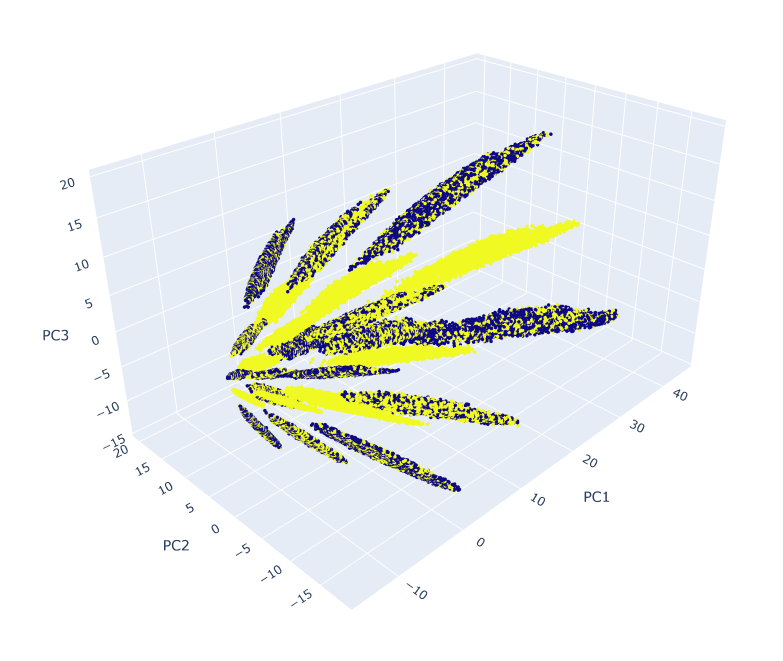}}
		\end{adjustwidth}
	\end{figure}
	
	\begin{figure}[ht!]
		\caption{\sf Dimensionality reduction using PCA for $(a_p)_{p\leq 1000 \atop p: \text{prime} } $ in the case of real quadratic fields of class number $h_d \in \{1,2\}$  (1: blue;  2: yellow)  }
		\label{fig:3d_PCA(3)-ap-1}
		\begin{adjustwidth}{-.5in}{-.5in} 
			\centering
			\includegraphics[width=8cm]{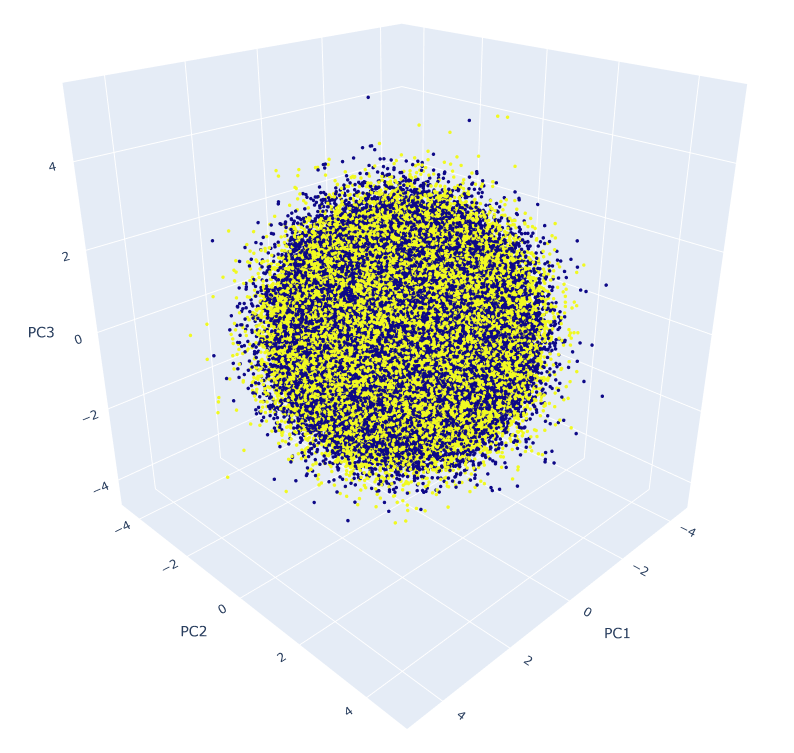}
		\end{adjustwidth}
	\end{figure}
	
	In the case of class number 1 and 3 fields in Figures \ref{fig:3d-PCA-all-data-h=1vs3}, \ref{fig:PCA-ap-h=1vs3} and \ref{fig:PCA-3d_disc+reg+ai(1)_h=1vs3}, we see that the data is much less separable from the point of view of PCA. In contrast to Figure \ref{fig:3d-PCA-all-data-h=1vs2}, Figure \ref{fig:3d-PCA-all-data-h=1vs3} shows that the coefficients of class number 1 and 3 fields are completely mixed, which can explain the greater difficulty in classifying such fields, and there is no noticeable difference between Figure \ref{fig:PCA-ap-h=1vs3} and Figure \ref{fig:3d_PCA(3)-ap-1}. Figure \ref{fig:PCA-3d_disc+reg+ai(1)_h=1vs3} shows the result when the first 10 prime coefficients are combined with other features $R_d$ and $D$. Recall that, with all these features combined, we could obtain a high accuracy as indicated in Table \ref{tab:class_number_1vs3}. 
	
	\begin{figure}[ht!]
		\caption{\sf Dimensionality reduction using PCA for $(a_n)_{n=1}^{1000}$ in the case of real quadratic fields of class number $h_d \in \{1,3 \}$  (1: blue;  3: yellow) }
		\label{fig:3d-PCA-all-data-h=1vs3}
		\begin{adjustwidth}{-.5in}{-.5in} 
			\centering
			\subfloat{\includegraphics[width=8cm]{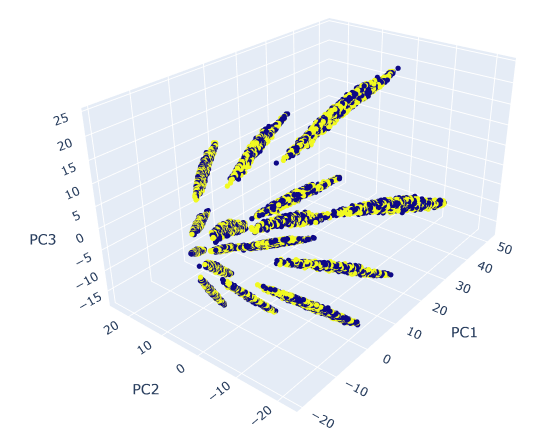}}
		\end{adjustwidth}
	\end{figure}
	
	\begin{figure}
		\caption{\sf Dimensionality reduction using PCA for $(a_p)_{p\leq 1000 \atop p: \text{prime} }$ in the case of real quadratic fields of class number $h_d \in \{ 1,3 \}$ (1:blue; 3: yellow) }
		\label{fig:PCA-ap-h=1vs3}
		\begin{adjustwidth}{-.5in}{-.5in} 
			\centering
			\includegraphics[width=8cm]{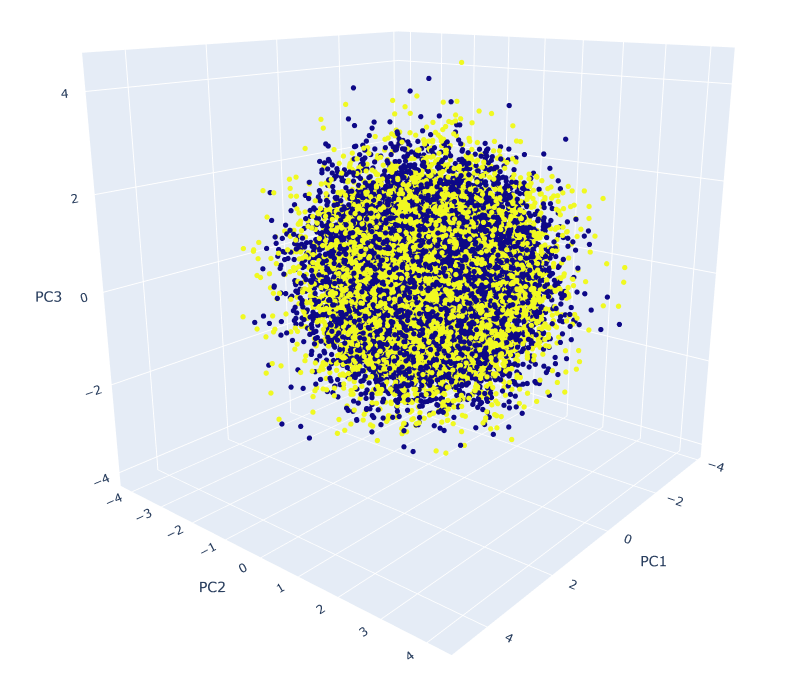}
		\end{adjustwidth}
	\end{figure}
	
	\begin{figure}
		\caption{\sf Dimensionality reduction using PCA for $D,R_d$ and the first 10 $a_p$-coefficients in the case of real quadratic fields of class number $h_d \in \{1,3\}$ (1:blue; 3: yellow)}
		\label{fig:PCA-3d_disc+reg+ai(1)_h=1vs3}
		\begin{adjustwidth}{-.5in}{-.5in} 
			\centering
			\subfloat{\includegraphics[width=8cm]{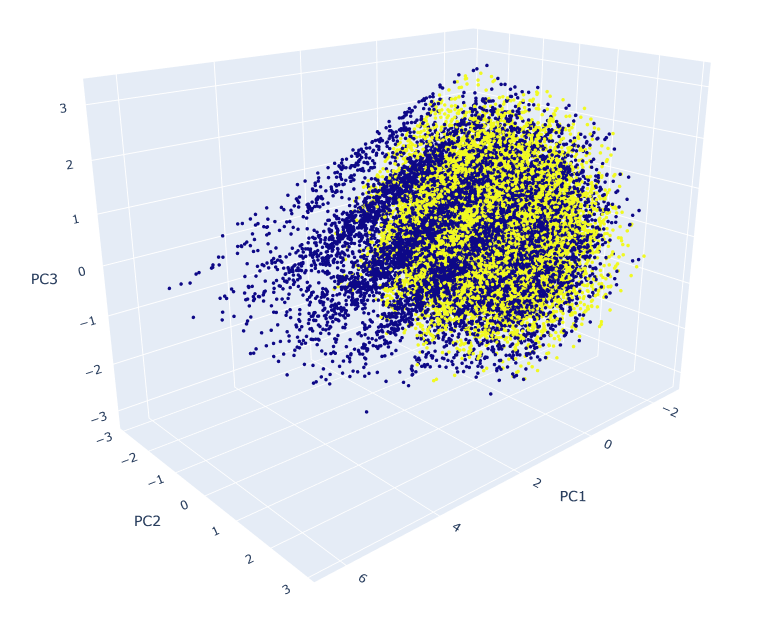}}
		\end{adjustwidth}
	\end{figure}

	\subsection{Supplementary material for Section~\ref{ss:pi}}\label{s:app}

	In Section~\ref{ss:pi}, we focused largely on the permutation feature importance. 
	In Figures~\ref{fig:calibration-LightGBM} and ~\ref{fig:confusion-LightGBM}, we record the 
	calibration curves and  confusion matrices for 70/30 splits.
	
	\begin{figure}[ht!]
		\caption{\sf Calibration curves of the LightGBM model (left) and CatBoost model (right) on a 70/30 split of the dataset for the binary classification task of quadratic fields with class number $h_d=1$ vs $h_d=2$.}
		\label{fig:calibration-LightGBM}
		\begin{adjustwidth}{-.5in}{-.5in} 
			\centering
			\subfloat{{\includegraphics[width=9cm]{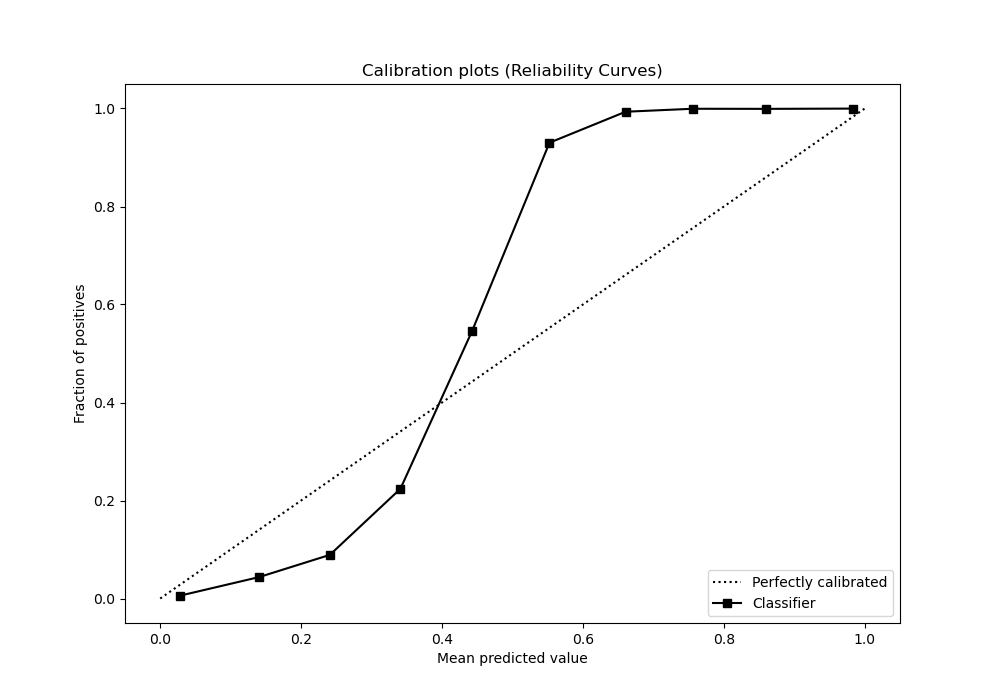} }}
			\subfloat{{\includegraphics[width=9cm]{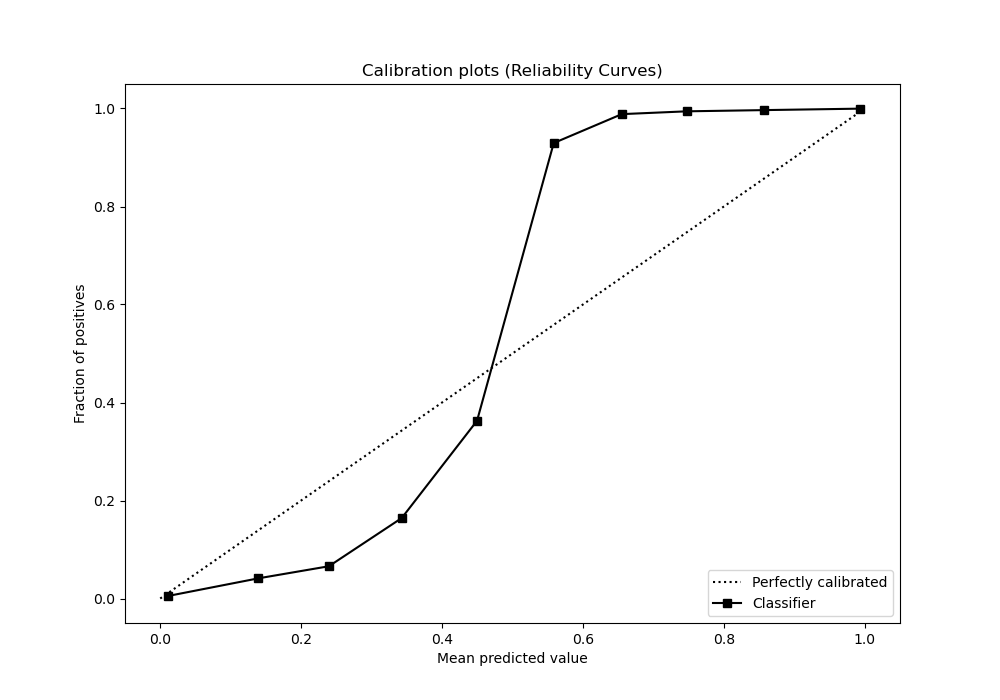} }}
		\end{adjustwidth}
	\end{figure}
	
	\begin{figure}[ht!]
		\caption{\sf Confusion matrix of the LightGBM model (left) and CatBoost model (right) on a 70/30 split of the dataset for the binary classification task of quadratic fields with class number $h_d=1$ vs $h_d=2$.}
		\label{fig:confusion-LightGBM}
		\begin{adjustwidth}{-.5in}{-.5in} 
			\centering
			\subfloat{{\includegraphics[width=9cm]{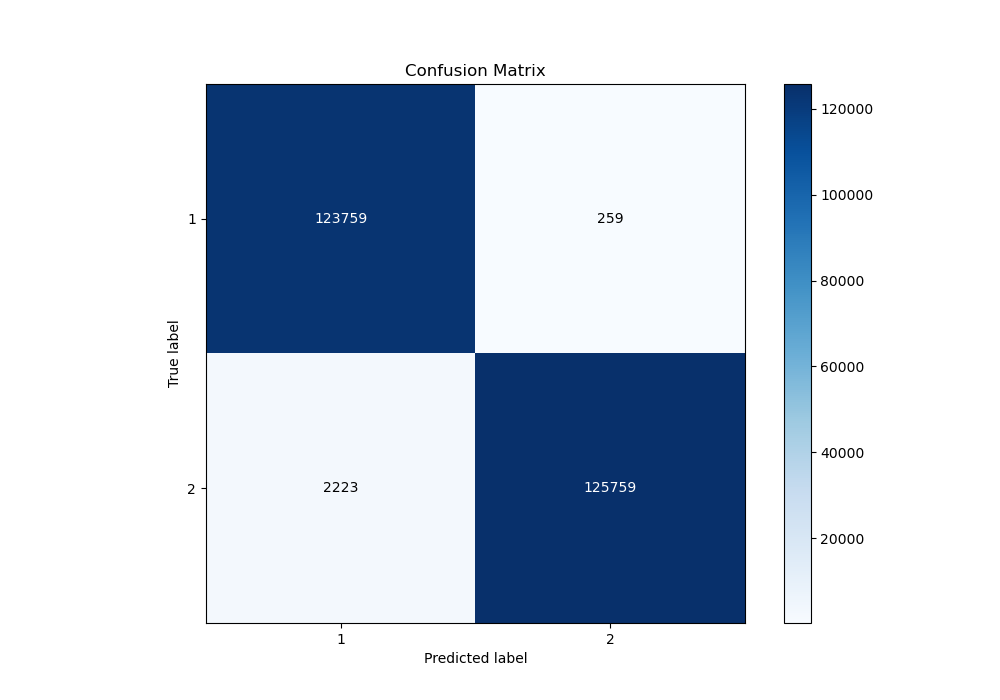} }}
			\subfloat{{\includegraphics[width=9cm]{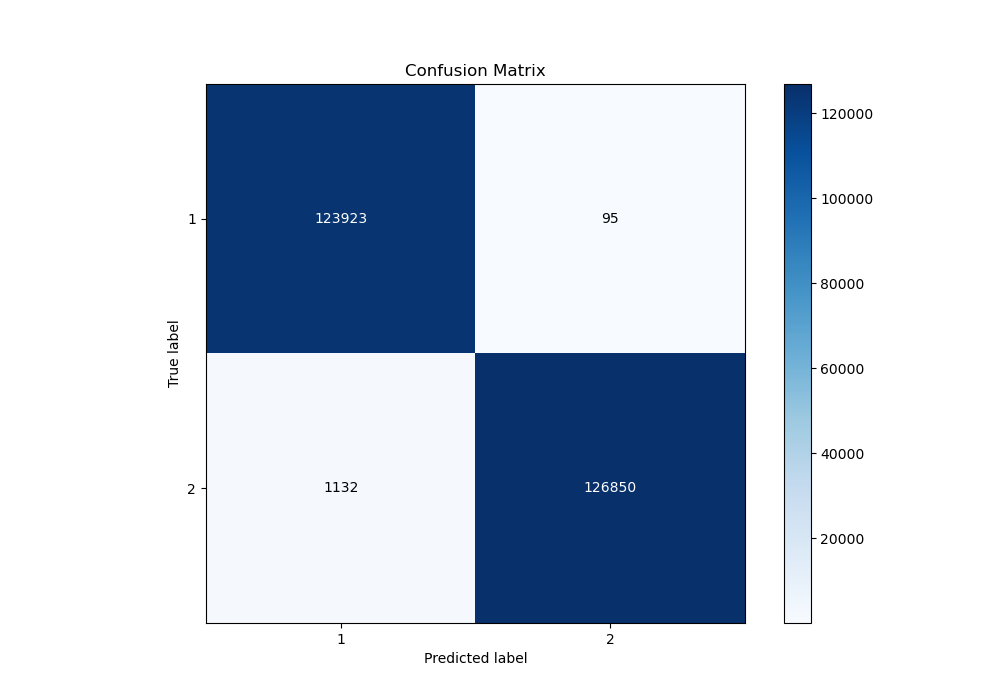} }}
		\end{adjustwidth}
	\end{figure}

\end{document}